\theoremstyle{plain}
\newtheorem{thm}{Theorem}[section]
\newtheorem{prop}[thm]{Proposition}
\newtheorem{lemma}[thm]{Lemma}
\newtheorem{cor}[thm]{Corollary}
\newcommand{\deqno}{\refstepcounter{thm}(\thethm)}
\theoremstyle{definition}
\newtheorem{defn}[thm]{Definition}
\newtheorem{disc}[thm]{Discussion}
\newtheorem{exam}[thm]{Example}
\theoremstyle{remark}
\newtheorem{remk}[thm]{Remark}
\newcounter{item}
\newenvironment{rlist}[1][1]{\begin{list}
  {\textup{(\roman{item})}}{\usecounter{item} \setcounter{item}{#1}\addtocounter{item}{-1}
  \setlength{\itemsep}{0ex}
  \setlength{\topsep}{0ex} \setlength{\parsep}{0ex} \setlength{\labelwidth}{15mm}
   \setlength{\leftmargin}{10mm} } }{\end{list}}
\newenvironment{alist}[1][1]{\begin{list}
  {\textup{(\alph{item})}}{\usecounter{item} \setcounter{item}{#1}\addtocounter{item}{-1}
  \setlength{\itemsep}{0ex}
  \setlength{\topsep}{0ex} \setlength{\parsep}{0ex} \setlength{\labelwidth}{15mm}
  \setlength{\leftmargin}{10mm} } }{\end{list}}
\newenvironment{nlist}[1][1]{\begin{list}
  {\textup{(\arabic{item})}}{\usecounter{item} \setcounter{item}{#1}\addtocounter{item}{-1}
  \setlength{\itemsep}{0ex}
  \setlength{\topsep}{0ex} \setlength{\parsep}{0ex} \setlength{\labelwidth}{15mm}
  \setlength{\leftmargin}{10mm} } }{\end{list}}
\newcommand{\tensor}{\otimes}
\newcommand{\Hom}{\textup{Hom}}
\newcommand{\Ext}{\textup{Ext}}
\newcommand{\im}{\textup{Im}}
\newcommand{\id}{\textup{id}}
\newcommand{\ra}{\rightarrow}
\newcommand{\isom}{\cong}
\newcommand{\wt}{\widetilde}
\begin{document}

\title{Axiomatic Closure Operations, Phantom Extensions, and Solidity}

\author[gdd]{Geoffrey D. Dietz}
\ead{gdietz@member.ams.org}
\address[gdd]{Department of Mathematics,
Gannon University, Erie, PA 16541}

\begin{keyword}
big Cohen-Macaulay modules\sep closure operations\sep tight closure\sep phantom extensions\sep solid modules and algebras

\MSC[2010] 13C14\sep 13A35
\end{keyword}

\date{\today}

\begin{abstract}
In this article, we generalize a previously defined set of axioms for a closure operation that induces balanced big Cohen-Macaulay modules. While the original axioms were only defined in terms of finitely generated modules, these new ones will apply to all modules over a local domain.  The new axioms will lead to a notion of phantom extensions for general modules, and we will prove that all modules that are phantom extensions can be modified into balanced big Cohen-Macaulay modules and are also solid modules. As a corollary, if $R$ has characteristic $p>0$ and is $F$-finite, then all solid algebras are phantom extensions. If $R$ also has a big test element (e.g., if $R$ is complete), then solid algebras can be modified into balanced big Cohen-Macaulay modules. (Hochster and Huneke have previously demonstrated that there exist solid algebras that cannot be modified into balanced big Cohen-Macaulay algebras.)  We also point out that tight closure over local domains in characteristic $p$ generally satisfies the new axioms and that the existence of a big Cohen-Macaulay module induces a closure operation satisfying the new axioms.
\end{abstract}

\maketitle

Let $R$ be a local domain. In \cite[(1.1)]{D10}, we presented seven axioms for a closure operation that were necessary and sufficient in order to prove the existence of balanced big Cohen-Macaulay modules over $R$. \cite[(3.16)]{D10} (Recall that a balanced big Cohen-Macaulay module $B$ over $R$ is a module for which every partial system of parameters for $R$ is a regular sequence on $B$.) The work is independent of characteristic considerations for $R$. Tight closure in equal characteristic satisfies these axioms. The existence of a big Cohen-Macaulay module or algebra over $R$ implies the existence of such a closure operation in any characteristic in mixed characteristic. Whether or not there exist big Cohen-Macaulay modules or algebras in mixed characteristic was in general a major open problem for many years, but the issue has been recently settled via the use of perfectoids. See \cite{A16}, \cite{B16}, \cite{HM17}. For rings of Krull dimension at most 3, see \cite{He02,Ho02} for older results in the literature.

In the present article, we will generalize the seven axioms to include closure within general $R$-modules, not just finitely generated modules. The axioms of \cite[(1.1)]{D10} will be termed \textit{finite axioms} while these new axioms will be called the \textit{big axioms}. We will show that tight closure in characteristic $p$ satisfies the big axioms (Proposition~\ref{tcbigaxioms}) and that the existence of a balanced big Cohen-Macaulay $R$-module induces a closure operation on modules that satisfies the big axioms (Proposition~\ref{bigaxiomsandbigCM}). These two results are analogous to \cite[(5.4), (4.2)]{D10} that were proven for the finite axioms. Given a closure operation satisfying the finite axioms, we prove (Proposition~\ref{finext}) that this operation can be extended to a closure for all $R$-modules that satisfies the big axioms. The technique is based on the idea of finitistic tight closure in \cite[Definition 8.19]{HH90}. 

In the second section, we will use the big axioms to define when a general $R$-module $M$ is a phantom extension of $R$ with respect to the closure operation. Although this definition appears to depend upon choices of a free presentation and lifts of certain maps, we prove that it is in fact independent of all choices. In the following section, we will prove that all phantom extensions can be modified into balanced big Cohen-Macaulay $R$-modules (Theorem~\ref{phmodbigCM}), which is analogous to \cite[(3.16)]{D10} showing that the finite notion of phantom extensions can be modified into big Cohen-Macaulay modules. In the fourth section, we use the earlier results to prove that if $\alpha:R\to M$ is phantom over a 
local domain, then $M$ is a solid module. We also prove that in prime characteristic $p$ over an $F$-finite ring $R$, solid algebras are phantom extensions (Theorem~\ref{solidalgph}). The resulting Corollary~\ref{solidalgbigCM} is that solid algebras can be modified into big Cohen-Macaulay modules, which complements the example of Hochster and Huneke \cite[Example 3.14]{HH09} of a solid algebra in characteristic $p$ that cannot be modified into a big Cohen-Macaulay algebra. On the other hand, we show that there are solid modules that cannot be phantom extensions with respect to any closure operation satisfying the axioms.

\section{Big and Finite Closure Axioms}

We will alter the notation of \cite[(1.1)]{D10} (e.g., $N_M^\natural$) and adopt some of the notation and naming conventions used in \cite{E12} to emphasize the generic, axiomatic nature of our closure operations and to avoid confusion with other closure operations that may use the $\natural$ symbol as well. 

\begin{defn} \label{bigaxioms}
Let $(R, \mathfrak{m})$ be a local domain. All modules named below are arbitrary and not necessarily finitely generated. 
The following is a list of axioms that may be satisfied by an operation $cl$ on submodules $N$ of $M$. 
\begin{nlist}
  
  \item (extension) $N_M^{cl}$ is a submodule of $M$ containing $N$. 
  
  \item (idempotence) $(N_M^{cl})_M^{cl} = N_M^{cl}$. 

  \item (order-preservation) If $N\subseteq M\subseteq W$, 
  then $N_W^{cl} \subseteq M_W^{cl}$. 
  
  \item (functoriality) Let $f:M\ra W$ be a homomorphism. 
  Then $f(N_M^{cl})\subseteq f(N)_W^{cl}$.
  
  \item (semi-residuality) If $N_{M}^{cl} =N$, then $0_{M/N}^{cl} = 0$.

  \item The ideal $\mathfrak{m}$ 
  is closed; \emph{i.e.},
  $\mathfrak{m}^{cl}_R = \mathfrak{m}$.
 
  \item (generalized colon-capturing) Let $x_1,\ldots, x_{k+1}$ be a partial system of parameters for
  $R$, and let $J=(x_1,\ldots,x_k)$. Suppose that there exists a surjective
  homomorphism $f:M\to R/J$ and $v\in M$ such that $f(v) = x_{k+1} + J$. Then
  $$
  (Rv)^{cl}_M \cap \textrm{ker}\, f \subseteq (Jv)^{cl}_M.
  $$

\end{nlist} 
We will refer to the list above as the \textit{big axioms}. If the closure operation only satisfies the axioms for finitely generated modules, then we say that the operation meets the \textit{finite axioms}. 
\end{defn}

\begin{remk}
Note that we removed the condition $0^{cl}_R = 0$ from the sixth axiom. A close reading of \cite{D10} shows that the only citation of that axiom was in Lemma~3.12, but changing the equality at the end of the displayed formula to an inclusion $\supseteq$ still yields the result but does so using axiom~(1) instead of $0^{cl}_R = 0$.
\end{remk}

It is worth pointing out that the results of \cite[Lemmas 1.2 and 1.3, Proposition 1.4]{D10} concerning quotients, direct sums, intersections, sums, and colon-capturing are still valid for an operation that satisfies the big axioms. The proofs are identical as they make no reference to finite generation of modules. We restate a few for ease of reference and add to that list a result related to isomorphisms.  Additionally, Mel Hochster and Rebecca R.G.\ pointed out that the condition $0^{cl}_R = 0$ follows from the other axioms. 

\begin{lemma}\label{axiomprop}
Let $cl$ be an operation satisfying the big (or finite) axioms. All $R$-modules are arbitrary (or finite). 
\begin{alist}
\item \cite[Lemma 1.2(a)]{D10} Let $N'\subseteq N \subseteq M$ be $R$-modules. Then $u\in N^{cl}_M$ if and only if $u+N'\in (N/N')^{cl}_{M/N'}$.
\item \cite[Lemma 1.2(b)]{D10} If $N_i\subseteq M_i$, then $(N_1\oplus N_2)^{cl}_{M_1\oplus M_2} = (N_1)^{cl}_{M_1} \oplus (N_2)^{cl}_{M_2}$. In the case of the big axioms, this result also applies to infinite direct sums.
\item \cite[Lemma 1.3]{D10} Let $x_1,\ldots, x_{k+1}$ be a partial system of parameters for $R$, and let $J=(x_1,\ldots,x_k)$. Suppose that there exists a homomorphism $f:M\to R/J$ and $v\in M$ such that $f(v) = x_{k+1} + J$. Then $(Rv)^{cl}_M \cap \textrm{ker}\, f \subseteq (Jv)^{cl}_M$.
\item If $f:M\to M'$ is an isomorphism of $R$-modules with $N\subseteq M$ and $N' = f(N)$, then $u\in N_M^{cl}$ if and only if $f(u) \in (N')_{M'}^{cl}$. 
\item $0^{cl}_R = 0$.
\item Let $u\in N_M^{cl}$. Then there exists $r\neq 0$ in $R$ such that $ru \in N$. 
\end{alist}
\end{lemma}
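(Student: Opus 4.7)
Proof plan.

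Parts (a)--(d) follow formally from the axioms; since the paper notes that the original proofs in \cite{D10} never used finite generation, I just record how each uses axioms~(2), (4), (5). For (a), apply functoriality~(4) to the projection $\pi:M \to M/N'$ for the forward direction; for the reverse, factor $\pi$ through $q:M/N' \to M/N^{cl}_M$ (well-defined since $N' \subseteq N \subseteq N^{cl}_M$) and note $q(N/N')=0$, so functoriality plus idempotence~(2) plus semi-residual~(5) places the image of $u+N'$ inside $0^{cl}_{M/N^{cl}_M}=0$, giving $u \in N^{cl}_M$. For (b), the two projections of $M_1 \oplus M_2$ yield the $\subseteq$ containment via functoriality and the two inclusions yield $\supseteq$, with axiom~(1) used to sum the two images. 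For (c), set $M' = M \oplus R/J$ and $g(m, s+J) = f(m) + s+J$; this is surjective and satisfies $g(v,0) = x_{k+1}+J$, so axiom~(7) on $M'$ applies and pushing the conclusion down through the projection $M'\to M$ (again by functoriality) gives the stated inclusion. Part (d) is functoriality applied in turn to $f$ and to $f^{-1}$.

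The real content is (e). Write $I = 0^{cl}_R$. Axioms~(3) and~(6) give $I \subseteq \mathfrak{m}^{cl}_R = \mathfrak{m}$, which settles the case $\dim R = 0$ outright. So assume $\dim R \geq 1$ and suppose for contradiction that there is $0 \neq r \in I$. Since $R$ is a local domain, $r$ is a nonzerodivisor in $\mathfrak{m}$, hence a partial system of parameters of length one, so axiom~(7) applies with $k=0$, $J=(0)$, and $x_1 = r$.

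Take $M = R^2$ with the surjection $f:R^2 \to R$ given by $f(a,b) = a + rb$, and set $v = (0,1)$, so $f(v) = r$ and $\ker f = R(-r,1)$. Part~(b), already proved, computes $(Rv)^{cl}_{R^2} = 0^{cl}_R \oplus R^{cl}_R = I \oplus R$ and $0^{cl}_{R^2} = I \oplus I$. Because $r \in I$ and $I$ is an ideal, the first coordinate $-rb$ of every element $(-rb,b) \in \ker f$ lies in $I$, so $\ker f \subseteq I \oplus R = (Rv)^{cl}_{R^2}$. Axiom~(7) then forces $\ker f \subseteq (Jv)^{cl}_{R^2} = 0^{cl}_{R^2} = I \oplus I$; taking $b=1$ gives $1 \in I$, contradicting $I \subseteq \mathfrak{m}$.

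The main obstacle is engineering the test module in (e). The coefficient $r$ in the map $f(a,b) = a + rb$ is the very element of $I$ we want to kill: this makes $\ker f$ slide automatically into $(Rv)^{cl}_{R^2}$ via the direct-sum lemma, while colon-capturing simultaneously squeezes it into $I \oplus I$, and the collision on the second coordinate produces a unit inside $I$. Once that trick is located, (e) collapses to a two-line calculation, and the remaining parts are routine diagram chases given functoriality, idempotence, and semi-residual.
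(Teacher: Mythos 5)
Your proof is correct and takes essentially the same approach as the paper for each part; in particular for (e) the paper uses $v=(1,0)$ with $f(r,s)=rx+s$ and derives the contradiction $(-1,x)\in 0^{cl}_R\oplus 0^{cl}_R\subseteq\mathfrak{m}\oplus R$, while you use $v=(0,1)$ with $f(a,b)=a+rb$ and land the unit in the second coordinate instead — a cosmetic coordinate swap of the identical argument. The sketches for (a)--(d) likewise match the paper's route (the paper cites the earlier proofs in~\cite{D10} and applies functoriality to $f$ and $f^{-1}$ for (d)).
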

\begin{proof}
For (a), (b), and (c), use the existing proofs in \cite{D10} as finiteness of modules is irrelevant to the proofs. For (d), apply axiom (4) to $f$ and to $f^{-1}$ to get the two containments.

\footnote{The proof of this result comes from Mel Hochster and Rebecca R.G.}For (e), first note that if $R$ is a local domain of Krull dimension 0, then $0 = \mathfrak{m}$ and so is closed by axiom (6). If $R$ has Krull dimension greater than 0, then suppose that $x\in 0^{cl}_R\setminus 0$. We can consider $x=x_1$ a partial s.o.p., and let $J=(0)$ be the ideal generated by the ``other'' parameters in the partial system. Let $M = R\oplus R$ and define $f:M\to R = R/J$ be $f(r,s) = rx+s$ so that $v = (1,0)\mapsto x_1$. Then $f(-1,x) = 0$, and $(-1,x) \in (Rv)^{cl}_M$ as $(Rv)_M^{cl} = (R\oplus 0)^{cl}_{R\oplus R} = R^{cl}_R \oplus 0^{cl}_R = R\oplus 0^{cl}_R$, which contains $(-1,0)$ and $(0,x)$. Hence, axioms~(6) and (7) imply that $(-1,x)\in (Jv)_M^{cl} = 0_M^{cl} = 0^{cl}_R \oplus 0^{cl}_R \subseteq \mathfrak{m}\oplus R$, which is impossible.

\footnote{The proof of this result arose from a conversation with Rebecca R.G.}For (f), let $M'$ be any module. Let $T$ be the submodule of torsion elements. Then $M'/T$ is torsion-free. By \cite[(3.1g)]{RG15b}, $0^{cl}_{M'/T} = 0$, which implies $T^{cl}_{M'} = T$ by part (a). As $0\subseteq T$, $0^{cl}_{M'} \subseteq T$ by the order-preservation axiom. Hence, if $u\in 0^{cl}_{M'}$, then $u\in T$ and so there exists $r\neq 0$ such that $ru = 0$. Now let $N\subseteq M$ be any modules and consider $M' = M/N$ and part (a). If $u\in N^{cl}_M$, then there exists an $r\neq 0$ such that $ru\in N$.
\end{proof}

The axioms and theorems developed in \cite{D10} are results of the finite axioms. In this paper we will develop analogues and new results for the big axioms. 
Although the big axioms may appear more powerful (or more restrictive) at first compared to the finite axioms, they are equivalent to each other in the following sense.

\begin{prop} \label{bigaxiomsandbigCM}
Let $(R, \mathfrak{m})$ be a local domain. The following are equivalent:
\begin{rlist}
\item There exists a closure operation $c$ that satisfies the big axioms (1)--(7).
\item There exists a closure operation $d$ that satisfies the finite axioms (1)--(7).
\item There exists a balanced big Cohen-Macaulay module $B$ over $R$.
\end{rlist}
\end{prop}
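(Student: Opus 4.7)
The plan is to establish the cycle $(i) \Rightarrow (ii) \Rightarrow (iii) \Rightarrow (i)$. The implication $(i) \Rightarrow (ii)$ is essentially automatic: restricting a closure operation satisfying the big axioms to submodules of finitely generated modules yields a closure on the finite category, and each of axioms $(1)$--$(7)$ specializes directly to its finite counterpart whenever every module appearing in its statement is finitely generated. For $(ii) \Rightarrow (iii)$ I would simply invoke the main theorem of \cite{D10}, which builds a balanced big Cohen-Macaulay module out of a finite-axiom closure via a direct limit of modifications. No new work is needed for either of these two steps.

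The substantive content is $(iii) \Rightarrow (i)$. Starting from a balanced big CM module $B$, I would define a closure on arbitrary pairs $N \subseteq M$ by
$$
N^{cl_B}_M := \{u \in M : u \otimes b \in \im(N \otimes_R B \to M \otimes_R B) \text{ for every } b \in B\},
$$
equivalently, the set of $u$ such that $(u+N) \otimes b = 0$ in $(M/N) \otimes_R B$ for every $b \in B$. Axioms $(1)$--$(5)$ should fall out from standard tensor-product bookkeeping and the right-exactness of $-\otimes_R B$. For axiom $(6)$, the condition $u \in \mathfrak{m}^{cl_B}_R$ reduces to $uB \subseteq \mathfrak{m}B$; since $B \neq \mathfrak{m}B$ (a consequence of the regular sequence hypothesis), picking any $b_0 \in B \setminus \mathfrak{m}B$ and observing that a unit $u$ would force $b_0 = u^{-1}(ub_0) \in \mathfrak{m}B$ rules out $u \notin \mathfrak{m}$.

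The main obstacle, and the only place where the regular sequence property of $B$ enters in an essential way, is verifying axiom $(7)$ (generalized colon-capturing). The plan is as follows: given $u \in (Rv)^{cl_B}_M \cap \ker f$ and any $b \in B$, observe that the image of $Rv \otimes_R B$ in $M \otimes_R B$ consists of elements of the form $v \otimes b''$, so the defining condition yields $u \otimes b = v \otimes b''$ for some $b'' \in B$. Applying $f \otimes \id_B$ to land in $(R/J) \otimes_R B = B/JB$ and using $f(u)=0$ together with $f(v) = x_{k+1}+J$ produces $\overline{x_{k+1} b''} = 0$ in $B/JB$. The decisive step is then to invoke that $x_{k+1}$ is a nonzerodivisor on $B/JB$ — precisely the regular sequence hypothesis on $x_1,\ldots,x_{k+1}$ — to conclude $b'' \in JB$, whence $v \otimes b''$ lies in the image of $Jv \otimes_R B \to M \otimes_R B$ and $u \in (Jv)^{cl_B}_M$ as required.
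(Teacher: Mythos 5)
Your proof is correct and follows essentially the same route as the paper: the implications (i)$\Rightarrow$(ii) and (ii)$\Rightarrow$(iii) are handled identically, and for (iii)$\Rightarrow$(i) the paper simply cites \cite[Theorem 4.2]{D10}, where the closure operation constructed is precisely the module closure $N^{cl_B}_M$ that you define. You have unpacked the cited verification (with the regular-sequence hypothesis on $B$ entering exactly where you say, in axiom (7)) rather than taken a genuinely different approach.
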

\begin{proof}
(i) implies (ii) is obvious with $d=c$. (ii) implies (iii) is the main result (Theorem 3.16) of \cite{D10}. To see (iii) implies (i), we just note that the proof of Theorem 4.2 in \cite{D10} suffices as finite generation of modules is irrelevant to the arguments, even though the theorem is stated to prove the existence of an operation meeting the finite axioms. 
\end{proof}

Although the proposition above implies that a closure operation $d$ meeting the finite axioms implies the existence of an operation $c$ meeting the big axioms, there is nothing guaranteeing that $c$ and $d$ agree for finitely generated modules. Indeed, the closure $d$ induces a big Cohen-Macaulay module $B$, which in turn induces a closure operation $c$ using extension and contraction from $B$, but those need not be the same. On the other hand, an operation satisfying the finite axioms can be extended to one meeting the big axioms using the idea of finitistic tight closure given in \cite[Definition 8.19]{HH90}. 

\begin{prop} \label{finext}
Let $(R, \mathfrak{m})$ be a local domain. Suppose that $cl$ is an operation on modules that satisfies the finite axioms. Then $cl$ can be extended to all modules so that the extended operation satisfies the big axioms.
\end{prop}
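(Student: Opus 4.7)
The plan is to follow the hint and mimic finitistic tight closure. For an arbitrary submodule $N\subseteq M$, define
\[
N^{\wt{cl}}_M \;=\; \bigl\{\,u\in M : u\in (N\cap M_0)^{cl}_{M_0}\ \text{for some f.g.\ submodule}\ M_0\subseteq M\ \text{with}\ u\in M_0\,\bigr\}.
\]
Since $R$ is Noetherian, $N\cap M_0$ is automatically finitely generated whenever $M_0$ is, so the inner closure is legitimately computed by the finite-axiom operation $cl$. Axioms (3) and (4) for $cl$ imply that any larger f.g.\ witness $M_0'\supseteq M_0$ also works; in particular $\wt{cl}$ agrees with $cl$ on f.g.\ modules (take $M_0 = M$).

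For most of the big axioms, verification is a routine witness-juggling exercise. Extension~(1) uses $M_0 = Ru$; order-preserving~(3) follows from (3) for $cl$ inside a shared witness; functoriality~(4) uses $f(N\cap M_0)\subseteq f(N)\cap f(M_0)$ and promotes $f(M_0)$ to a witness in the target; and axiom~(6) is inherited because $R$ itself is f.g.\ over $R$. For semi-residual~(5), given $\bar u\in 0^{\wt{cl}}_{M/N}$ witnessed by a f.g.\ $K\subseteq M/N$, choose a preimage $u$ of $\bar u$ and lift a set of generators of $K$ (including $\bar u$) to a f.g.\ $M_0\subseteq M$ so that $M_0/(M_0\cap N)\cong K$; Lemma~\ref{axiomprop}(a) and (d) applied to $cl$ then convert $\bar u\in 0^{cl}_K$ into $u\in (M_0\cap N)^{cl}_{M_0}$, whence $u\in N^{\wt{cl}}_M = N$ and $\bar u = 0$. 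Generalized colon-capturing~(7) is analogous: given $u\in (Rv)^{\wt{cl}}_M\cap\ker f$ witnessed by $M_0$, replace $M_0$ by $M_0 + Rv$, use (3) for $cl$ to pass from $(Rv\cap M_0)^{cl}_{M_0}$ to $(Rv)^{cl}_{M_0}$, and apply Lemma~\ref{axiomprop}(c) to the restriction $f|_{M_0}:M_0\to R/J$.

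The step that requires a genuine amalgamation --- and is where I expect the real work to sit --- is idempotence~(2). Suppose $u\in (N^{\wt{cl}}_M)^{\wt{cl}}_M$ is witnessed by a f.g.\ $M_0$, so $u\in (N_0)^{cl}_{M_0}$ where $N_0 := N^{\wt{cl}}_M\cap M_0$ is f.g.\ by Noetherianness. Each generator $n_i$ of $N_0$ carries its own witness $M_i\subseteq M$ with $n_i\in (N\cap M_i)^{cl}_{M_i}$, and the key move is to assemble the single f.g.\ submodule $M' = M_0 + M_1 + \cdots + M_r$. Pushing each $n_i\in (N\cap M_i)^{cl}_{M_i}$ up to $M'$ using axiom~(4) and then enlarging the inner submodule via (3) shows $N_0\subseteq (N\cap M')^{cl}_{M'}$. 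Similarly $u\in (N_0)^{cl}_{M'}$ by (4), and order-preserving together with idempotence of $cl$ give
\[
u \in (N_0)^{cl}_{M'} \;\subseteq\; \bigl((N\cap M')^{cl}_{M'}\bigr)^{cl}_{M'} \;=\; (N\cap M')^{cl}_{M'},
\]
so $u\in N^{\wt{cl}}_M$, as required. Once this amalgamation step is in place, the remaining verifications are straightforward bookkeeping with the finite axioms.
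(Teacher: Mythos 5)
Your proposal is correct and matches the paper's proof essentially verbatim: same finitistic definition via finitely generated witnesses, same amalgamation of witnesses for idempotence, and the same axiom-by-axiom reductions using Lemma~\ref{axiomprop}(a), (c), (d). The only detail you gloss over is that axiom~(1) also requires $N^{\wt{cl}}_M$ to be a submodule (closed under $R$-linear combinations), which needs the same witness-amalgamation trick (take $M_1 + M_2$ for two elements) that you already deploy in your idempotence argument.
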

\begin{proof}
Given a notion of $N_M^{cl}$ for finitely generated modules $N\subseteq M$ that satisfies the finite axioms, we extend $cl$ to all modules $N\subseteq M$ via
$u\in N_M^{cl}$ if and only if there exists a finitely generated submodule $M_0 \subseteq M$ such that $u \in (N\cap M_0)_{M_0}^{cl}$. This definition leaves $N_M^{cl}$ unchanged for finitely generated modules $M$ due to the inclusion $\iota: M_0\to M$, the containments $N\cap M_0 \subseteq N \subseteq M$, and finite axioms (4) and (3). Thus, this definition is an extension of $cl$ to arbitrary modules. 

Let $N\subseteq M$ be arbitrary modules. We check that the big axioms hold. 
\begin{nlist}
\item If $u\in N$, then $u\in (N\cap Ru)_{Ru}^{cl}$ by finite axiom (1) so that $N\subseteq N_M^{cl}$. Let $r\in R$ and $u_1, u_2 \in N_M^{cl}$. Then there exist finitely generated submodules $M_1$, $M_2$ of $M$ such that $u_i \in (N\cap M_i)^{cl}_{M_i}$. Let $M_0 = M_1 + M_2$, a finitely generated submodule of $M$. Then finite axioms (3) and (4) imply that $u_i \in (N\cap M_0)^{cl}_{M_0}$. Thus, finite axiom (1) implies that $ru_1 + u_2 \in (N\cap M_0)^{cl}_{M_0}$ and so $ru_1 + u_2 \in N_M^{cl}$. 
\item Suppose that $u\in (N_M^{cl})_M^{cl}$. We claim that $u\in N_M^{cl}$. There exists a finitely generated submodule $M_0\subseteq M$ such that $u\in (N_M^{cl}\cap M_0)_{M_0}^{cl}$. Let $w_1,\ldots, w_k$ be generators of the finitely generated submodule $N_M^{cl}\cap M_0$ of $M_0$. Then for each $j$ there exists a finitely generated submodule $M_j$ of $M$ such that $w_j\in (N\cap M_j)_{M_j}^{cl}$ as each $w_j$ is in $N_M^{cl}$. Let $Q = M_0 + M_1 + \cdots + M_k$, a finitely generated submodule of $M$. By finite axioms (4) and (3), $w_j \in (N\cap M_j)_{M_j}^{cl} \subseteq (N\cap M_j)_Q^{cl} \subseteq (N\cap Q)_Q^{cl}$ for each $j$, and so $N_M^{cl}\cap M_0 \subseteq (N\cap Q)_Q^{cl}$. Thus, finite axioms (4), (3), and (2) imply that
$$
(N_M^{cl}\cap M_0)_{M_0}^{cl} \subseteq (N_M^{cl}\cap M_0)_{Q}^{cl}  \subseteq ((N\cap Q)_Q^{cl})_Q^{cl} = (N\cap Q)_Q^{cl}
$$
which means that $u\in (N\cap Q)_Q^{cl}$ and so $u\in N_M^{cl}$ as claimed.
\item follows from finite axioms (3) as $(N\cap W_0)^{cl}_{W_0} \subseteq (M\cap W_0)^{cl}_{W_0}$ for any finitely generated submodule $W_0$ of $W$.
\item follows from finite axioms (3) and (4) as $M_0$ finitely generated in $M$ implies that $f(M_0)$ is a finitely generated submodule of $W$ and $f(N\cap M_0) \subseteq f(N) \cap f(M_0)$
\item Suppose that $N_M^{cl} = N$ and $\overline{u} \in 0_{M/N}^{cl}$ where $u\in M$. Then there exists a finitely generated submodule $M_0$ of $M$ such that $\overline{u} \in 0^{cl}_{\frac{M_0+N}{N}}$. Thus big axiom (4) and Lemma~\ref{axiomprop}(d) imply that $\overline{u} \in 0^{cl}_{\frac{M_0}{M_0\cap N}}$. Then Lemma~\ref{axiomprop}(a) applied to the finite module $\frac{M_0}{M_0\cap N}$ implies that $u\in (N\cap M_0)^{cl}_{M_0}$ and so $u\in N_M^{cl} = N$.
\item obvious
\item Let $x_1,\ldots x_{k+1}$ be a partial system of parameters, $J = (x_1,\ldots, x_k)$, $M$ an arbitrary $R$-module, $f:M\to R/J$ with $v\in M$ such that $f(v) = \overline{x_{k+1}}$. Suppose that $u\in (Rv)_M^{cl}$ and $f(u) = 0$. Then there exists a finite module $M_0\subseteq M$ such that $u\in (Rv\cap M_0)_{M_0}^{cl}$. Using finite axioms (3) and (4), we can assume without loss of generality that $v\in M_0$ and so $u\in (Rv)_{M_0}^{cl}$. Restricting $f$ to $M_0$ and applying Lemma~\ref{axiomprop}(c) shows that $u \in (Jv)_{M_0}^{cl}$ and so $u\in (Jv)_M^{cl}$. 
\end{nlist}
\end{proof}
 
As tight closure in characteristic $p$ over a complete local domain defined for finitely-generated modules satisfies the finite axioms \cite[Example 5.4]{D10}, finitistic tight closure satisfies the big axioms by the result above. We also note that tight closure in characteristic $p$ defined for arbitrary modules satisfies the big axioms, but the proof requires the use of a big test element in order to prove idempotence.

Let $(R,\mathfrak{m})$ be a local domain of prime characteristic $p$. Let  $\mathcal{F}$ denote the Frobenius functor on $R$-modules induced by the Frobenius endomorphism for $R$. Let $\mathcal{F}^e$ represent the iterated Frobenius functor, and let $N_M^{[q]} = \im(F^e(N) \to F^e(M))$. Then $u\in N^*_M$ if and only if there exists $c\ne 0$ such that $c\tensor u = cu^q \in N_M^{[q]}$ for all $e\geq 0$. See \cite[Section 8]{HH90} or \cite{Ho07} for further details on tight closure of modules. We say that $R$ possesses a big test element $c$ if the same $c$ can be used in all tight closure calculations for all pairs of modules $N\subseteq M$. Big test elements are known to exist in a large number of cases, e.g., if $R$ is also complete. See \cite[p.\ 150]{Ho07}. Note that we are not using finitistic tight closure and that whether these two concepts are the same or not is still an open problem.

\begin{prop} \label{tcbigaxioms}
Let $(R,\mathfrak{m})$ be a local domain of prime characteristic $p$ that possesses a big test element, e.g., $R$ is also complete. Then tight closure defined as above for all $R$-modules obeys the big axioms.
\end{prop}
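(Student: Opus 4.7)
The plan is to verify each of the seven big axioms in turn for module tight closure $N^*_M$. Axioms (1), (3), and (4) follow directly from the definitions: for (1) take $c=1$; for (3) observe that $F^e(N) \to F^e(W)$ factors through $F^e(M) \to F^e(W)$, so any witness $c$ for $u \in N^*_W$ is also a witness for $u \in M^*_W$; for (4) apply $F^e(f)$ to a tight closure relation, sending $N_M^{[q]}$ into $f(N)_W^{[q]}$. For (5), right-exactness of $F^e(R)\tensor_R -$ identifies $F^e(M/N)$ with $F^e(M)/N_M^{[q]}$; if $\bar u \in 0^*_{M/N}$ with witness $c$, then $c\tensor u \in N_M^{[q]}$ for every $q$, forcing $u \in N^*_M = N$ and hence $\bar u = 0$.

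The crux of the argument is axiom (2) (idempotence), which is the classical use of the big test element. Suppose $u \in (N^*_M)^*_M$ with some witness $c_1 \ne 0$; then for each $q$ we can write $c_1 \tensor u = \sum_i s_i \tensor v_i$ in $F^e(M)$ with $v_i \in N^*_M$. The defining property of the big test element $c_0$ is that $c_0 \tensor v \in N_M^{[q]}$ for every $v \in N^*_M$ and every $q$, with the \emph{same} $c_0$ used for all $v$ and $q$. Commutativity of $F^e(R)$ permits us to pass $c_0$ across the tensor: $(c_0 c_1)\tensor u = c_0(c_1\tensor u) = \sum_i s_i(c_0\tensor v_i) \in N_M^{[q]}$ for every $q$. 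Since $c_0 c_1 \ne 0$ in the domain $R$, we conclude $u \in N^*_M$.

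For (6), note that $\mathfrak{m}^*_R$ is an ideal of the local ring $R$, so if it strictly contains $\mathfrak{m}$ it must be all of $R$, giving $1 \in \mathfrak{m}^*_R$; a witness $c \ne 0$ would then satisfy $c \in \mathfrak{m}^{[q]} \subseteq \mathfrak{m}^q$ for every $q$, contradicting Krull's intersection theorem. For (7), given $u \in (Rv)^*_M \cap \ker f$ with witness $c$, write $c\tensor u = a_q \tensor v$ in $F^e(M)$; applying $F^e(f)$ and using $f(v) = \overline{x_{k+1}}$ together with $f(u) = 0$ yields $a_q x_{k+1}^q \in J^{[q]}$, i.e.\ $a_q \in (J^{[q]} : x_{k+1}^q)$. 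Standard colon-capturing for tight closure applied to the partial system of parameters $x_1^q,\dots,x_{k+1}^q$ then gives $a_q \in (J^{[q]})^*$, and the big test element collapses this to $c_0 a_q \in J^{[q]}$. Using the tensor identity $j^q\tensor v = 1\tensor jv$ in $F^e(M)$, we get $(c_0c)\tensor u = c_0 a_q \tensor v \in (Jv)_M^{[q]}$ uniformly in $q$, so $u \in (Jv)^*_M$.

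The main obstacle is axiom (2): in the definition of $v \in N^*_M$ the witness depends a priori on $v$, and without a uniform big test element there is no way to simultaneously clear denominators across the sum $\sum_i s_i \tensor v_i$ representing an element of $(N^*_M)_M^{[q]}$. The big test element also does double duty in axiom (7), where it is needed in order to convert membership in $(J^{[q]})^*$ into honest membership in $J^{[q]}$; this is the one axiom whose proof additionally invokes the (classical) colon-capturing theorem for tight closure of parameter ideals.
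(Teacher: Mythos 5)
Your proof is correct and follows essentially the same approach as the paper: the key step, using the big test element to collapse $(N^*_M)^*_M$ down to $N^*_M$, is exactly the paper's argument for axiom (2), and your arguments for the remaining axioms spell out the standard tight-closure facts (Krull intersection for axiom (6), colon-capturing for parameter ideals plus the big test element for axiom (7)) that the paper instead handles by citation to \cite{HH90} and \cite{D10}.
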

\begin{proof}
Let $N\subseteq M$ be arbitrary $R$-modules.
\begin{nlist}
\item Proposition 8.5(a) of \cite{HH90}
\item Let $c$ be a big test element for $R$. If $u\in (N_M^*)^*_M$, then $cu^q \in (N_M^*)_M^{[q]}$. Thus, $cu^q = r_1u_1^q + \cdots r_nu_n^q$, where $r_i\in R$ and $u_i\in N^*_M$. Then $cu_i^q \in N^{[q]}_M$, and $c^2u^q \in N^{[q]}_M$ for all $q$, which implies that $u\in N^*_M$. (\textit{cf.} \cite[p164]{Ho07})
\item Proposition 8.5(b) of \cite{HH90}
\item Lemma 5.5 of \cite{D10} gives a valid proof, even though the statement unnecessarily presumes the modules are finitely generated.
\item Remark 8.4 in  \cite{HH90}
\item 
$\mathfrak{m}$ is a prime ideal, which is always tightly closed.
\item The proof of Proposition 5.6 of \cite{D10} still applies when $M$ is not necessarily finite. 
\end{nlist}
\end{proof}

\section{Phantom Extensions}

In this section we will generalize and refine the notion of phantom extensions introduced in \cite[Section 5]{HH94} with respect to tight closure for prime characteristic rings and generalized in \cite{D10} to any closure operation obeying the finite axioms independent of characteristic. We expand the notion to arbitrary $R$-modules and remove references to matrix representations of maps. In the end, the description below is equivalent to the other two in the cases of characteristic $p$ or finite modules. Throughout this section $(R,\mathfrak{m})$ is a local domain.

\begin{disc} \label{freepres}
Let $M$ be an arbitrary $R$-module along with an injection $\alpha: R\to M$ yielding the short exact sequence $0\to R\overset{\alpha}\to M\overset{\beta}\to Q\to 0$. Let $G\overset{\nu}\to F\overset{\mu}\to Q\to 0$ be any free presentation of $Q$. We will construct maps connecting the free presentation of $Q$ to the given short exact sequence. Although the methods are standard and well-known, we will take care in the construction so that we are fully aware of where choices are made in the process. 

Start by choose a lifting $\wt\mu: F\to M$ of $\mu$ such that $\mu = \beta\circ \wt\mu$. Consider the map $\wt\mu\circ \nu: G\to M$. Then $\beta\circ (\wt\mu\circ \nu) = \mu\circ \nu = 0$, and so $\wt\mu\circ \nu$ maps $G$ into the $\ker \beta = \im\ \alpha$. 
Hence $\wt\mu\circ \nu: G\to \im\ \alpha\subseteq M$. Since $\alpha$ is injective, we have a map $\wt\nu = \alpha^{-1}\circ \wt\mu\circ \nu: G\to R$. 
All told, we have a commutative diagram with exact rows:
$$
\xymatrix{ 0\ar[r] & R \ar[r]^\alpha & M \ar[r]^\beta & Q\ar[r] & 0 \\
& G\ar[r]^\nu \ar[u]_{\wt\nu} & F\ar[r]^\mu\ar[u]_{\wt\mu} & 
Q\ar[r]\ar[u]_\id & 0} \leqno \deqno \label{dgrm}
$$
Note that given the injection $\alpha: R\to M$, we made only two choices in constructing the diagram above: the choice of a free presentation and the choice of the lifting $\wt\mu$; the map $\wt\nu$ was uniquely determined by the other choices.

Let $(-)^\vee$ denote the dual operator $\Hom_R(-,R)$. Then $\nu^\vee: F^\vee \to G^\vee$ is the map sending $h\in F^\vee$ to $h\circ\nu \in G^\vee$, and $\wt\nu \in G^\vee$. 
\end{disc}

\begin{defn}
Let $cl$ be an operation that obeys the big axioms. Given an injection $\alpha:R\to M$ and a diagram as in (\ref{dgrm}), we say that $M$ is a \textit{phantom extension of $R$ via $\alpha$} if $\wt\nu \in (\im\ \nu^\vee)^{cl}_{G^\vee}$.
\end{defn}

\begin{remk}
This definition is equivalent in characteristic $p$ for finite modules to that given in \cite[Definition 5.2]{HH94} due to Theorem 5.13 of the same article. It is also equivalent to the definition given in \cite[Definition 2.2]{D10} with a different choice for notation. Essentially, to say that $\alpha$ is a phantom extension means that the cycle in $\Ext^1_R(Q,R)$ that corresponds to the SES built from $\alpha$ also lives in the closure of the boundaries (calculated within $G^\vee$). We have chosen, however, not to refer to membership in the module $\Ext^1_R(Q,R)$ but leave everything in terms of diagrams instead. 
\end{remk}

The definition is independent of both the choice of presentation and the lift $\wt\mu$.

\begin{lemma} \label{indep}
Let $cl$ be an operation obeying the big axioms, and let $\alpha:R\to M$ be an injection. If $\wt\nu \in (\im\ \nu^\vee)^{cl}_{G^\vee}$ for some choice of free presentation and some choice of $\wt\mu$, then it is true for all free presentations and choices of $\wt\mu$. 
\end{lemma}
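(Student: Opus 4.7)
The plan is to split the independence proof into two claims. First, (A) for a fixed presentation, show the condition does not depend on the lift $\wt\mu$. Second, (B) show that switching to any other free presentation (with any lift) yields an equivalent condition. Together (A) and (B) imply the lemma: one first moves between lifts for the starting presentation, then transports to the new presentation and moves between lifts there.

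For (A), I would take two lifts $\wt\mu_1,\wt\mu_2:F\to M$ of $\mu$. Since $\beta(\wt\mu_1-\wt\mu_2)=0$, the difference has image in $\ker\beta=\im\,\alpha$, so $\sigma:=\alpha^{-1}\circ(\wt\mu_1-\wt\mu_2):F\to R$ is well-defined. Then $\wt\nu_1-\wt\nu_2=\sigma\circ\nu=\nu^\vee(\sigma)\in\im\,\nu^\vee$, which sits in the submodule $(\im\,\nu^\vee)^{cl}_{G^\vee}$ by axiom (1). Hence $\wt\nu_1$ lies in that closure exactly when $\wt\nu_2$ does.

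For (B), let $G'\overset{\nu'}\to F'\overset{\mu'}\to Q\to 0$ be a second free presentation, and assume $\wt\nu\in(\im\,\nu^\vee)^{cl}_{G^\vee}$ for some lift $\wt\mu$. I would first use projectivity of the free module $F'$ to choose $\psi:F'\to F$ with $\mu\psi=\mu'$; then $\mu(\psi\nu')=\mu'\nu'=0$ forces $\psi\nu'$ to land in $\ker\mu=\im\,\nu$, and projectivity of $G'$ yields $\psi_G:G'\to G$ with $\nu\psi_G=\psi\nu'$. Setting $\wt\mu':=\wt\mu\circ\psi:F'\to M$ gives $\beta\wt\mu'=\mu\psi=\mu'$, so it is a valid lift for the second diagram, and
\[
\wt\nu' \;=\; \alpha^{-1}\wt\mu'\nu' \;=\; \alpha^{-1}\wt\mu\nu\psi_G \;=\; \wt\nu\circ\psi_G \;=\; \psi_G^\vee(\wt\nu).
\]
Axiom (4) applied to $\psi_G^\vee:G^\vee\to G'^\vee$ puts $\wt\nu'$ into $\bigl(\psi_G^\vee(\im\,\nu^\vee)\bigr)^{cl}_{G'^\vee}$. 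The compatibility $\nu\psi_G=\psi\nu'$ dualizes to $\psi_G^\vee(h\circ\nu)=(h\circ\psi)\circ\nu'=\nu'^\vee(h\circ\psi)$ for every $h\in F^\vee$, so $\psi_G^\vee(\im\,\nu^\vee)\subseteq\im\,\nu'^\vee$. Axiom (3) then produces $\wt\nu'\in(\im\,\nu'^\vee)^{cl}_{G'^\vee}$, and a second application of (A) to the presentation $(G',F')$ extends this to every lift of $\mu'$. The reverse implication follows by swapping the two presentations.

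The main obstacle is the construction in (B): building the chain map $(\psi,\psi_G)$ between the two presentations via projectivity and verifying that dualizing it carries $\im\,\nu^\vee$ into $\im\,\nu'^\vee$. Once that algebraic setup is in place, functoriality and order-preservation (axioms (4) and (3)) transport the closure condition from $G^\vee$ to $G'^\vee$, and part (A) absorbs any residual lift dependence on either end.
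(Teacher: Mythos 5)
Your proof is correct and follows essentially the same route as the paper: part (A) is identical (the difference of lifts factors through $\alpha$ and lands in $\im\,\nu^\vee$), and part (B) is the paper's construction of a chain map from the second presentation into the first via projectivity, followed by dualizing and invoking axioms (4) and (3) exactly as you do.
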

\begin{proof}
First suppose that a choice of free presentation has been made, but two choices of lifts $\wt\mu$ and $\wt\mu'$ have been made, along with corresponding $\wt\nu$ and $\wt\nu'$.
$$
\xymatrix{ R \ar[r]^\alpha & M \ar[r]^\beta & Q\ar[r] & 0 \\
G\ar[r]^\nu \ar@<-.5ex>[u]_{\wt\nu'} \ar@<.5ex>[u]^{\wt\nu} & F\ar[r]^\mu \ar@<-.5ex>[u]_{\wt\mu'} \ar@<.5ex>[u]^{\wt\mu} & 
Q\ar[r]\ar[u]_\id & 0} 
$$
Since $\beta\circ \wt\mu = \mu = \beta\circ \wt\mu'$, we have $\wt\mu - \wt\mu': F\to \ker\beta = \im\ \alpha \subseteq M$ and so $\alpha^{-1}\circ (\wt\mu - \wt\mu'): F\to R$ is a well-defined map. Moreover, 
$$\nu^\vee(\alpha^{-1}\circ (\wt\mu - \wt\mu') )= \alpha^{-1}\circ (\wt\mu - \wt\mu')\circ \nu = \wt\nu - \wt\nu'$$ 
so that $\wt\nu - \wt\nu'\in \im\ \nu^\vee \subseteq (\im\ \nu^\vee)^{cl}_{G^\vee}$ by axiom (1). Therefore, $\wt\nu \in (\im\ \nu^\vee)^{cl}_{G^\vee}$ if and only if $\wt\nu' \in (\im\ \nu^\vee)^{cl}_{G^\vee}$.

Now suppose that $\wt\nu \in (\im\ \nu^\vee)^{cl}_{G^\vee}$ for a certain diagram (\ref{dgrm}) and that $G'\overset{\nu'}\to F'\overset{\mu'}\to Q\to 0$ is another free presentation of $Q$. Since $G'$ and $F'$ are free, we can build a commutative diagram
$$
\xymatrix{ R \ar[r]^\alpha & M \ar[r]^\beta & Q\ar[r] & 0 \\
G\ar[r]^\nu \ar[u]_{\wt\nu} & F\ar[r]^\mu\ar[u]_{\wt\mu} & Q\ar[r]\ar[u]_\id & 0\\
G'\ar[r]^{\nu'}  \ar[u]_{\phi} & F'\ar[r]^{\mu'} \ar[u]_{\psi} & Q\ar[r]\ar[u]_\id & 0
}
$$
Let $\wt\mu' = \wt\mu\circ \psi$ and let $\wt\nu' = \wt\nu\circ \phi$. Consider the dual map $\phi^\vee: G^\vee \to (G')^\vee$, where $\wt\nu' = \phi^\vee(\wt\nu)$ and $\phi^\vee(\im\ \nu^\vee) \subseteq \im\ (\nu')^\vee$ as 
$$
\phi^\vee\circ \nu^\vee(h) = h\circ\nu \circ \phi = h\circ \psi\circ \nu' = (\nu')^\vee(h\circ \psi)
$$ 
for all $h:F\to R$. Therefore, axioms (3) and (4) imply that $\wt\nu' \in (\im\ (\nu')^\vee)^{cl}_{(G')^\vee}$. Since this latter condition is true for this particular $\wt\mu'$ and $\wt\nu'$, then it is true for all choices of such maps using the free presentation $G'\overset{\nu'}\to F'\overset{\mu'}\to Q\to 0$ according to the first part of the proof. Therefore, the inclusion $\wt\nu \in (\im\ \nu^\vee)^{cl}_{G^\vee}$ (or non-inclusion) is independent of the choice of free presentation for $Q$ and choice of lifting $\wt\mu$.
\end{proof}

\begin{exam}
Notice that if $\alpha:R\to R$ is the identity map, then $Q=0$ and one can choose $G=F=0$ and $\wt\nu = 0$-map. It is thus trivially true that $\wt\nu \in (\im\ \nu^\vee)^{cl}_{G^\vee}$ and so $R$ is trivially a phantom extension of itself via the identity map.
\end{exam}

\begin{disc}
Given an injection $\alpha:R\to M$, expand diagram (\ref{dgrm}) to
$$
\xymatrix{ R \ar[r]^\alpha & M \ar[r]^\beta & Q\ar[r] & 0 \\
G\ar[r]^\nu \ar[u]_{\wt\nu} & F\ar[r]^\mu\ar[u]_{\wt\mu} & 
Q\ar[r]\ar[u]_\id & 0\\
G\ar[r]^{\nu_1} \ar[u]_{\id} & F\oplus R\ar[r]^{\mu_1} \ar[u]_{\pi} & 
M\ar[r]\ar[u]_\beta & 0
} \leqno \deqno \label{bigdgrm}
$$
where $\pi(f,r) = f$, $\mu_1(f,r) = \wt\mu(f) - \alpha(r)$, and $\nu_1(g) = (\nu(g), \wt\nu(g))$. 

The diagram commutes because $\pi\circ\nu_1(g) = \nu(g) = \nu\circ\id(g)$, and 
$$
\beta\circ\mu_1(f,r) = \beta\circ\wt\mu(f) - \beta\circ\alpha(r) = \mu(f) - 0 = \mu\circ\pi(f,r).
$$

The bottom row is also exact. The map $\mu_1$ is surjective as a given $m\in M$ leads to $\beta(m)= \mu(f) \in Q$, and $\beta(m - \wt\mu(f)) = \mu(f) - \mu(f) = 0$ leads to $m-\wt\mu(f) = \alpha(r)$ for an $r\in R$. Thus, $m = \wt\mu(f) +\alpha(r) = \mu_1(f,-r)$. 
We next show that we have exactness in the middle. First, $\mu_1\circ \nu_1(g) = \wt\mu(\nu(g)) - \alpha(\wt\nu(g)) = 0$. Second, if $\mu_1(f,r) = 0$, then $\wt\mu(f) = \alpha(r)$ and so $\mu(f) = \beta(\wt\mu(f)) = \beta(\alpha(r)) = 0$. Hence, $f = \nu(g)$ for some $g\in G$, and $\wt\nu(g) = \alpha^{-1}\circ \wt\mu\circ \nu(g) = \alpha^{-1}(\wt\mu(f)) = \alpha^{-1}(\alpha(r)) = r$. Thus, $\nu_1(g) = (\nu(g),\wt\nu(g)) = (f,r)$, and so the bottom row is exact.

We have proven the following.
\end{disc}

\begin{lemma}
Given an injection $\alpha:R\to M$, one can build a commutative diagram (\ref{bigdgrm}) with exact rows.
\end{lemma}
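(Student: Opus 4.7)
The plan is to build the bottom row directly from the data already present in diagram (\ref{dgrm}), and then check commutativity and exactness by elementary chases. The choice of middle term is forced: since $M$ is generated by $\wt\mu(F)$ together with $\alpha(R)$, it is natural to present $M$ as a quotient of $F\oplus R$.

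First I will define the three new maps in the only natural way suggested by the existing diagram. Take $\pi:F\oplus R\to F$ to be projection onto the first coordinate, set $\mu_1(f,r) = \wt\mu(f) - \alpha(r)$, and set $\nu_1(g) = (\nu(g),\wt\nu(g))$. Commutativity of the two new squares is then immediate: $\mu\circ\pi(f,r) = \mu(f) = \beta\wt\mu(f) - \beta\alpha(r) = \beta\circ\mu_1(f,r)$ since $\beta\circ\alpha = 0$, and $\pi\circ\nu_1(g) = \nu(g) = \nu\circ\id(g)$.

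Next I will verify exactness of the bottom row in three stages. For surjectivity of $\mu_1$: given $m\in M$, pick $f\in F$ with $\mu(f) = \beta(m)$, so that $m-\wt\mu(f)\in \ker\beta = \im\,\alpha$; writing $m-\wt\mu(f) = -\alpha(r)$ gives $\mu_1(f,r) = m$. For the inclusion $\im\,\nu_1 \subseteq \ker\mu_1$: compute $\mu_1\circ\nu_1(g) = \wt\mu\nu(g) - \alpha\wt\nu(g)$, which vanishes by the defining relation $\alpha\circ\wt\nu = \wt\mu\circ\nu$. For $\ker\mu_1\subseteq\im\,\nu_1$: if $\wt\mu(f) = \alpha(r)$, applying $\beta$ gives $\mu(f) = 0$, so by exactness of the middle row there is $g\in G$ with $\nu(g) = f$; then $\alpha\wt\nu(g) = \wt\mu\nu(g) = \wt\mu(f) = \alpha(r)$, and injectivity of $\alpha$ forces $r = \wt\nu(g)$, yielding $(f,r) = \nu_1(g)$.

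There is no genuine obstacle here: every verification is a short diagram chase resting only on the defining relation $\wt\nu = \alpha^{-1}\circ\wt\mu\circ\nu$, the lift condition $\beta\circ\wt\mu = \mu$, the injectivity of $\alpha$, and exactness of the middle row of (\ref{dgrm}). The one place where two pieces of data interact is the last step, in which both injectivity of $\alpha$ and the defining relation are used to recover the second coordinate of a kernel element.
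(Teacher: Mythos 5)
Your proof is correct and follows exactly the same route as the paper: you introduce the same maps $\pi$, $\mu_1$, $\nu_1$, verify commutativity of the two new squares, and check exactness of the bottom row by the same three-stage chase (surjectivity of $\mu_1$, $\im\,\nu_1\subseteq\ker\mu_1$, and $\ker\mu_1\subseteq\im\,\nu_1$ using injectivity of $\alpha$). The only cosmetic difference is a sign convention in the surjectivity step.
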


This diagram will allow us to understand $\alpha(1)$ within $M$ better.

\begin{lemma}
Let $cl$ be an operation satisfying the big axioms. Let $\alpha:R\to M$ be an injection with associated diagram (\ref{bigdgrm}). If $\alpha(1)\in \mathfrak{m}M$, then $\wt\nu$ is onto. The converse is true when $\im\ \nu \subseteq \mathfrak{m}F$, e.g., the free presentation is minimal. 
\end{lemma}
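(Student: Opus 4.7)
The plan is to carry out straightforward diagram chases through (\ref{bigdgrm}), and no property of $cl$ beyond the fact that the bottom row is exact will be needed.

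For the forward direction, assume $\alpha(1)\in \mathfrak{m}M$. Write $\alpha(1)=\sum_i m_i x_i$ with $m_i\in \mathfrak{m}$ and $x_i\in M$, and use surjectivity of $\mu_1$ to write each $x_i=\mu_1(f_i,r_i)=\wt\mu(f_i)-\alpha(r_i)$. Collecting terms gives $\alpha(1+r)=\wt\mu(f)$ for some $f\in \mathfrak{m}F$ and $r\in \mathfrak{m}$. Since $1+r$ is a unit, I can replace $f$ by $(1+r)^{-1}f$ (still in $\mathfrak{m}F$) and assume outright that $\alpha(1)=\wt\mu(f)$. Applying $\beta$ yields $\mu(f)=\beta\alpha(1)=0$, so by exactness of the middle row $f=\nu(g)$ for some $g\in G$. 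The definition $\wt\nu=\alpha^{-1}\circ\wt\mu\circ\nu$ then forces $\wt\nu(g)=\alpha^{-1}(\alpha(1))=1$, so $1\in \im\wt\nu$ and hence $\wt\nu$ is surjective.

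For the converse, assume $\im\nu\subseteq \mathfrak{m}F$ and that $\wt\nu$ is onto. Pick $g\in G$ with $\wt\nu(g)=1$. Commutativity of the top square in (\ref{bigdgrm}) gives $\alpha(1)=\alpha(\wt\nu(g))=\wt\mu(\nu(g))$, and since $\nu(g)\in \im\nu\subseteq \mathfrak{m}F$, we get $\alpha(1)=\wt\mu(\nu(g))\in \wt\mu(\mathfrak{m}F)\subseteq \mathfrak{m}M$, as desired. The ``minimal presentation'' case is an immediate instance since a minimal free presentation of $Q$ has $\im\nu\subseteq \mathfrak{m}F$ by Nakayama.

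I do not anticipate a genuine obstacle: the content of the lemma is just the observation that $1\in \im\wt\nu$ is the exact diagrammatic translation of $\alpha(1)\in \mathfrak{m}M$, once one uses the unit $1+r$ to absorb the $-\alpha(r)$ error term on the $R$-component in $F\oplus R$. The only spot demanding a moment of care is justifying that one may arrange $f\in \mathfrak{m}F$ in the forward direction; this is where the hypothesis $\alpha(1)\in \mathfrak{m}M$ (as opposed to merely $\alpha(1)\in M$) is crucial, and it is precisely why the converse requires the auxiliary hypothesis $\im\nu\subseteq \mathfrak{m}F$.
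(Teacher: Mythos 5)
Your proof is correct and follows essentially the same diagram chase as the paper: both use surjectivity and a unit ($1+r$ in your notation, $1-m$ in the paper's) to place $\alpha(1)$ in $\im\ \wt\mu$ and then pull a preimage back through exactness, the only cosmetic difference being that you work with the middle row of the diagram and the identity $\wt\nu = \alpha^{-1}\circ\wt\mu\circ\nu$ directly while the paper routes through the bottom row via $\mu_1$ and $\nu_1$. One small slip in your closing commentary: the forward direction never actually uses $f\in\mathfrak{m}F$ --- what the hypothesis $\alpha(1)\in\mathfrak{m}M$ buys you is $r\in\mathfrak{m}$, hence that $1+r$ is a unit; the condition $\im\ \nu\subseteq\mathfrak{m}F$ is needed only for the converse.
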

\begin{proof}
Suppose that $\alpha(1) \in \mathfrak{m}M$. Then $\alpha(1) \in \mathfrak{m}\im\ \wt\mu + \mathfrak{m}\im\ \alpha \subseteq \im\ \wt\mu + \mathfrak{m}\alpha(1)$, which implies that $\alpha(1) = \wt\mu(f_1) + x\alpha(1)$, where $x\in\mathfrak{m}$ and so $(1-x)\alpha(1) = \wt\mu(f_1)$. As $1-x$ is a unit in the local ring $R$, we have $\alpha(1) \in \im\ \wt\mu$. In other words, $\alpha(1) = \wt\mu(f)$ for some $f\in F$. Thus, $\mu_1(f,1) = \wt\mu(f) - \alpha(1) =0$. By exactness, $(f,1) = \nu_1(g)$ for some $g\in G$, and so $\wt\nu(g) = 1$, making $\wt\nu$ surjective.

Now suppose that $\wt\nu(g) = 1$ for some $g\in G$ and that $\im\ \nu \subseteq \mathfrak{m}F$. Then $0 = \mu_1\circ \nu_1 (g) = \mu_1(\nu(g),1) = \wt\mu(\nu(g)) - \alpha(1)$ and so $\alpha(1) \in \wt\mu(\im\ \nu) \subseteq \wt\mu( \mathfrak{m}F ) \subseteq \mathfrak{m}M$. 
\end{proof}

\begin{cor} \label{avoid}
Let $cl$ be an operation satisfying the big axioms. If $\alpha:R\to M$ is phantom, then $\alpha(1)\not\in \mathfrak{m}M$.
\end{cor}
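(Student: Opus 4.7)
The plan is to argue by contradiction: suppose $\alpha$ is phantom and $\alpha(1)\in\mathfrak{m}M$, and derive $1\in\mathfrak{m}$, which contradicts $(R,\mathfrak{m})$ being local. The main device is the evaluation homomorphism $\textup{ev}_g\colon G^\vee\to R$, $h\mapsto h(g)$, applied to the phantom condition $\wt\nu\in(\im\ \nu^\vee)^{cl}_{G^\vee}$ for a carefully chosen $g\in G$.

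First I construct a free presentation in which some $g$ satisfies both $\wt\nu(g)=1$ and $\nu(g)\in\mathfrak{m}F$. Write $\alpha(1)=\sum_{i=1}^n m_i y_i$ with $m_i\in\mathfrak{m}$ and $y_i\in M$, let $\{q_\lambda\}_{\lambda\in\Lambda}$ be a generating set of $Q$ containing each $q_i:=\beta(y_i)$, let $F=R^{(\Lambda)}$ with basis $\{e_\lambda\}$ and $\mu(e_\lambda)=q_\lambda$, and choose $\wt\mu$ with $\wt\mu(e_i)=y_i$. Let $\nu\colon G\to F$ be any free presentation of $\ker\mu$, producing $\wt\nu$ as in (\ref{dgrm}). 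Then $\wt\mu(\sum_i m_i e_i)=\sum_i m_i y_i=\alpha(1)$, so $\mu_1(\sum_i m_i e_i,1)=0$ in the bottom row of (\ref{bigdgrm}); by exactness there exists $g\in G$ with $\nu_1(g)=(\sum_i m_i e_i,1)$, whence $\nu(g)=\sum_i m_i e_i\in\mathfrak{m}F$ and $\wt\nu(g)=1$. By Lemma~\ref{indep}, the phantom hypothesis $\wt\nu\in(\im\ \nu^\vee)^{cl}_{G^\vee}$ still holds for this presentation.

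Applying axiom~(4) to $\textup{ev}_g$ gives $1=\wt\nu(g)=\textup{ev}_g(\wt\nu)\in(\textup{ev}_g(\im\ \nu^\vee))^{cl}_R$. For any $h\in F^\vee$, $\textup{ev}_g(\nu^\vee(h))=h(\nu(g))=\sum_i m_i h(e_i)\in\mathfrak{m}$, so $\textup{ev}_g(\im\ \nu^\vee)\subseteq\mathfrak{m}$. Axioms (3) and (6) then yield $(\textup{ev}_g(\im\ \nu^\vee))^{cl}_R\subseteq\mathfrak{m}^{cl}_R=\mathfrak{m}$, forcing $1\in\mathfrak{m}$, the required contradiction.

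The main obstacle is ensuring that a single $g\in G$ satisfies both $\wt\nu(g)=1$ and $\nu(g)\in\mathfrak{m}F$ in the chosen presentation: the previous lemma gives surjectivity of $\wt\nu$ but not the extra $\mathfrak{m}$-condition. For finitely generated $Q$ a minimal free presentation would force $\im\ \nu\subseteq\mathfrak{m}F$ automatically, but since $Q$ may fail to admit a minimal presentation, the coefficients from an explicit decomposition of $\alpha(1)$ in $\mathfrak{m}M$ must be built into the presentation by hand. Once this hand-crafted presentation is in place, the axioms do the rest.
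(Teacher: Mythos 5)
Your proof is correct, and it takes essentially the same route as the paper: produce $g\in G$ with $\wt\nu(g)=1$ and $\nu(g)\in\mathfrak{m}F$, apply the evaluation map $\textup{ev}_g\colon G^\vee\to R$, and combine axioms~(3), (4), and (6) to force $1\in\mathfrak{m}$. The only substantive difference is how that element $g$ is obtained, and here your version is actually more careful than the paper's. The paper's proof says to take a \emph{minimal} free presentation of $Q$, so that $\im\,\nu\subseteq\mathfrak{m}F$ globally; but since $Q=M/\alpha(R)$ is an arbitrary (not necessarily finitely generated) module, a minimal free presentation need not exist (e.g.\ $Q$ could have $\mathfrak{m}Q=Q$, in which case no surjection $F\surj Q$ from a free module has kernel inside $\mathfrak{m}F$). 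You correctly flag this and sidestep it by building the coefficients of a decomposition $\alpha(1)=\sum m_i y_i$ into the chosen basis and lift, so that a single $g$ with $\nu(g)=\sum m_i e_i\in\mathfrak{m}F$ and $\wt\nu(g)=1$ is guaranteed; no global condition on $\im\,\nu$ is needed, since the evaluation map only sees $\nu(g)$. It is worth noting that the paper's own proof of the lemma immediately preceding the corollary already produces (implicitly) an $f\in\mathfrak{m}F$ with $\alpha(1)=\wt\mu(f)$ — one only has to track the $\mathfrak{m}$-coefficients through the unit adjustment — so the paper could be repaired by citing that directly rather than invoking minimality. Your construction is an equivalent, more explicit way of doing the same thing, and it makes the argument watertight for arbitrary $M$.
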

\begin{proof}
Given $\alpha$, build a diagram (\ref{dgrm}) using a minimal free presentation so that $\wt\nu \in (\im\ \nu^\vee)^{cl}_{G^\vee}$, which we can do by the independence of free presentation implied by Lemma~\ref{indep}. By minimality, we may use the converse of the lemma above. For a contradiction, suppose that $\wt\nu(g^*) = 1$ for some $g^*\in G$.  Let $\phi: G^\vee\to R$ be the evaluation map $\phi(h) = h(g^*)$. Then $\phi(\wt\nu) = 1$. If $\eta \in F^\vee$, then $\phi(\eta\circ \nu) = \eta(\nu(g^*)) \in \mathfrak{m}$ as $\im\ \nu \subseteq \mathfrak{m}F$. Thus, $\phi(\im\ \nu^\vee) \subseteq \mathfrak{m}$. Axioms (3), (4), and (6) imply that $1 = \phi(\wt\nu) \in \mathfrak{m}^{cl}_R = \mathfrak{m}$, a contradiction. 
\end{proof}

\section{Phantom Extensions and Big Cohen-Macaulay Modules}

In this section, we show that phantom extensions can be modified into balanced big Cohen-Macaulay modules. This was also done in \cite[Section 3]{D10} but only for finitely-generated phantom extensions. We now extend the result to arbitrary phantom extensions and refine the argument to avoid the use of matrices and basis choices.
As in \cite{Ho75}, \cite{HH94}, and \cite{D10}, we will make use of module modifications in order to construct a balanced big Cohen-Macaulay module by forcing relations on systems of parameters to be trivialized in a sequence of modules. The key part is to show that one prevents the limit of the sequence $B$ from having $B = \mathfrak{m}B$. 
The first step for us though is to build a commutative diagram like (\ref{bigdgrm}) with some extra properties based on a relation on a system of parameters in the module $M$. 

\begin{disc} \label{sopconstr}
Let $M$ be an arbitrary $R$-module with $\alpha:R\to M$ an injection. Let $x_1\ldots, x_{k+1}$ be part of a system of parameters for $R$ and suppose that 
$$
x_1u_1+\cdots + x_{k+1}u_{k+1} = 0
$$ 
is a relation in $M$. Using the short exact sequence $0\to R\overset{\alpha}\to M\overset{\beta}\to Q\to 0$, we will build a free presentation of $Q$ as in (\ref{dgrm}), but we want to highlight an element that maps to $\overline{u_{k+1}} \in Q$. Let $F = F_1\oplus Rf^*$ be a direct sum of free modules that maps onto $Q$ via $\mu$ so that $\mu(f^*) = \overline{u_{k+1}}$. (We choose $F_1$ so that it already surjects onto $Q$ and think of $f^*$ as a redundant add-on.) Given this free $F$ and map $\mu$, extend it to a free presentation of $Q$ with a free module $G$.
We can then choose a lift $\wt\mu: F \to M$ of $\mu$ so that $\wt\mu(f^*) = u_{k+1}$ while $\wt\mu$ restricted to $F_1$ is lifted arbitrarily. Then construct the rest of the maps in (\ref{bigdgrm}) as in Section~2. 

Under these conditions, we study $\im\ \nu^\vee$ in $G^\vee$. Since $F = F_1\oplus Rf^*$, we have $F^\vee = (F_1)^\vee \oplus (Rf^*)^\vee$. Let $h^* \in (Rf^*)^\vee$ be the map $h^*(f^*) = 1$, and let $y = \nu^\vee(h^*) = h^*\circ \nu \in \im\ \nu^\vee$. Then $\im\ \nu^\vee = H+Ry$, where $H=\nu^\vee((F_1)^\vee)$. 

We have thus constructed a commutative diagram as in (\ref{bigdgrm}) with exact rows with the following extra conditions, which we list for ease of reference:
\begin{nlist} 
\item $F = F_1 \oplus Rf^*$ with $\mu(f^*) = \overline{u_{k+1}}$, $\wt\mu(f^*) = u_{k+1}$, and $\mu_1(f^*,0) = u_{k+1}$
\item $\im\ \nu^\vee = H+Ry = \nu^\vee((F_1)^\vee) + R(h^*\circ \nu)$, where $h^*(f^*) = 1$
\end{nlist}
\end{disc}

\begin{lemma} \label{sopmap}
Let $M$ be an arbitrary $R$-module with $\alpha:R\to M$ an injection. Let $x_1,\ldots, x_{k+1}$ be part of a system of parameters for $R$,  $I = (x_1,\ldots,x_k)$, and suppose that $x_1u_1+\cdots + x_{k+1}u_{k+1} = 0$ is a relation in $M$. Use the construction of Discussion~\ref{sopconstr} applied to diagram (\ref{bigdgrm}). Let $N = G^\vee/H$. Then there exists a 
map $\phi: N \to R/I$ with $\phi(y) = x_{k+1} + I$ and $\wt\nu \in \ker \phi$.
\end{lemma}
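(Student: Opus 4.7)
The plan is to build $\phi$ as an evaluation map at a carefully chosen element $g\in G$, composed with the quotient $R\to R/I$. The element $g$ should witness the given relation $\sum x_i u_i = 0$ by lifting it along $\nu$.

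First I would select $f_1,\ldots,f_k\in F_1$ with $\mu(f_i)=\overline{u_i}$; this is possible since $F_1$ surjects onto $Q$ under $\mu$. Each such $f_i$ satisfies $\beta(\wt\mu(f_i)-u_i)=0$, so $\wt\mu(f_i)=u_i+\alpha(r_i)$ for a unique $r_i\in R$. Set
$$
z = x_1 f_1 + \cdots + x_k f_k + x_{k+1} f^* \in F.
$$
Combining the hypothesis $x_1u_1+\cdots+x_{k+1}u_{k+1}=0$ with $\wt\mu(f^*)=u_{k+1}$ gives $\wt\mu(z)=\alpha(s)$, where $s:=\sum_{i=1}^{k} x_i r_i\in I$. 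Hence $\mu(z)=\beta\wt\mu(z)=0$, so by exactness of the top row of (\ref{bigdgrm}) there exists $g\in G$ with $\nu(g)=z$, and consequently $\wt\nu(g)=\alpha^{-1}\wt\mu(\nu(g))=s\in I$.

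Next I would define $\phi_0:G^\vee\to R/I$ by $\phi_0(\psi)=\psi(g)+I$ and verify that $\phi_0$ kills $H=\nu^\vee((F_1)^\vee)$. Any element of $H$ has the form $\eta\circ\nu$ for some $\eta\in(F_1)^\vee$, regarded as a map $F\to R$ via extension by zero on $Rf^*$; evaluating at $g$ yields $\eta(z)=\sum_{i=1}^{k} x_i\eta(f_i)\in I$. So $\phi_0$ descends to the desired $\phi:N\to R/I$. The two required values follow immediately: $\phi(y)=h^*(z)+I=x_{k+1}+I$ (using $h^*|_{F_1}=0$ and $h^*(f^*)=1$), and $\phi(\wt\nu)=\wt\nu(g)+I=s+I=0$, so $\wt\nu\in\ker\phi$.

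I do not foresee a serious obstacle; the only substantive point is checking that $\phi_0$ vanishes on $H$, which is guaranteed by the decomposition $F=F_1\oplus Rf^*$ that isolates the coefficient $x_{k+1}$ on $f^*$ while keeping all other coefficients of $z$ inside $I$.
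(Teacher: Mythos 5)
Your proof is correct and is essentially the paper's argument: you find $g\in G$ with $\nu(g)=z=x_1f_1+\cdots+x_kf_k+x_{k+1}f^*$ and define $\phi$ by evaluation at $g$ modulo $I$, exactly as the paper does with its $g^*$ (the paper phrases the lift through the bottom row $\ker\mu_1=\im\,\nu_1$ of diagram (\ref{bigdgrm}), but since $\wt\nu(g)=\alpha^{-1}\wt\mu(\nu(g))$ is determined by $\nu(g)$, your $g$ automatically satisfies $\nu_1(g)=(z,s)$, so the two choices coincide). One small slip: the exactness you invoke to produce $g$, namely $\ker\mu=\im\,\nu$, is from the \emph{middle} row of (\ref{bigdgrm}) (the free presentation of $Q$), not the top row.
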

\begin{proof}
Since $x_1u_1+\cdots + x_{k+1}u_{k+1} = 0$ in $M$ and $\mu_1:F\oplus R\to M$ is surjective, there exists $(f_1,r_1),\ldots,(f_k,r_k)$ in $F\oplus R$ such that $\mu_1(f_i,r_i) = u_i$ (without loss of generality, we may assume that $f_i\in F_1\subseteq F$ since $F_1$ surjects onto $Q$). Thus, 
$$x_1(f_1,r_1)+\cdots + x_k (f_k,r_k) + x_{k+1}(f^*,0) \in \ker \mu_1 = \im\ \nu_1.$$ 
Let $g^*\in G$ be such that 
$$\nu_1(g^*) = x_1(f_1,r_1)+\cdots + x_k (f_k,r_k) + x_{k+1}(f^*,0),$$ 
and define a map $\psi: G^\vee\to R/I$ by $\psi(h) = h(g^*) + I$.  
We claim that $\psi(H) \subseteq I$, which would induce a 
map $\phi: N\to R/I$. Indeed, if $h\circ \nu \in H =  \nu^\vee((F_1)^\vee)$ with $h\in (F_1)^\vee$, then 
$$h\circ\nu (g^*) = h(x_1 f_1 + \cdots + x_{k} f_{k}+x_{k+1}f^*)\in I$$
 because $h(f^*) = 0$.

 Now we establish the claim about $\phi(y)$.  
 $$\psi(y) = y(g^*) = h^*\circ \nu(g^*) = h^*(x_1 f_1 + \cdots + x_{k} f_{k}+x_{k+1}f^*) = x_{k+1}+I$$
  as $h^*(f_i)=0$ since $f_i\in F_1$, and $h^*(f^*) = 1$ so that $\phi(y) = x_{k+1}+I$ too. 
  
Finally we show that $\wt\nu\in \ker \phi$. 
$$\psi(\wt\nu) = \wt\nu(g^*) + I = \alpha^{-1}\circ\wt\mu\circ\nu(g^*) + I = \alpha^{-1}\circ\wt\mu(x_1 f_1 + \cdots + x_{k} f_{k}+x_{k+1}f^*) + I,$$ 
but $\wt\mu(f_i) = \mu_1(f_i,r_i)+\alpha(r_i) = u_i + \alpha(r_i)$ and $\wt\mu(f^*) = u_{k+1}$. 
Therefore, 
$$\psi(\wt\nu) = \alpha^{-1}(x_1u_1+\cdots x_ku_k + x_{k+1}u_{k+1} + \alpha(x_1r_1+\cdots + x_kr_k)) + I = I$$ 
as  $x_1u_1+\cdots + x_{k+1}u_{k+1} = 0$. So, $\wt\nu\in \ker \phi$. 
\end{proof}

\begin{cor} \label{sopph}
Under the same assumptions and constructions of Lemma~\ref{sopmap}, assume also that $cl$ is a closure operation for $R$-modules that obeys the big axioms and that $\alpha:R \to M$ is phantom. Then $\wt\nu \in (H+Iy)^{cl}_{G^\vee}$. 
\end{cor}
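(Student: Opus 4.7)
The plan is to combine the phantom hypothesis, which lives in $G^\vee$, with the generalized colon-capturing property (via Lemma~\ref{axiomprop}(c)) applied to the map $\phi$ produced by Lemma~\ref{sopmap}. The natural place to run colon-capturing is in the quotient $N = G^\vee/H$, since that is where $\phi$ is defined and since modding out by $H$ converts the closure statement we start with into one about a cyclic submodule. Lemma~\ref{axiomprop}(a) is exactly the tool for going back and forth between $G^\vee$ and $N$.

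Concretely, I would argue as follows. Since $\alpha$ is phantom and $\im\ \nu^\vee = H + Ry$ by the construction in Discussion~\ref{sopconstr}, the phantom hypothesis reads
\[
\wt\nu \;\in\; (H + Ry)^{cl}_{G^\vee}.
\]
Passing to $N = G^\vee/H$ via Lemma~\ref{axiomprop}(a), and writing $\bar{\wt\nu}, \bar y$ for the images in $N$, this becomes
\[
\bar{\wt\nu} \;\in\; (R\bar y)^{cl}_N.
\]
Lemma~\ref{sopmap} provides a homomorphism $\phi \colon N \to R/I$ with $\phi(\bar y) = x_{k+1} + I$ and $\bar{\wt\nu} \in \ker\phi$. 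Since $x_1,\dots,x_{k+1}$ is a partial system of parameters, Lemma~\ref{axiomprop}(c) (the non-surjective version of axiom (7)) applies to give
\[
(R\bar y)^{cl}_N \cap \ker\phi \;\subseteq\; (I\bar y)^{cl}_N,
\]
so $\bar{\wt\nu} \in (I\bar y)^{cl}_N$. Passing back through Lemma~\ref{axiomprop}(a) yields
\[
\wt\nu \;\in\; (H + Iy)^{cl}_{G^\vee},
\]
which is the desired conclusion.

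I do not expect any serious obstacle: every ingredient is set up precisely for this argument. The only thing worth double-checking is that one is allowed to invoke the weaker, non-surjective form of colon-capturing recorded in Lemma~\ref{axiomprop}(c) rather than axiom (7) itself, since $\phi$ need not be surjective; but that is exactly why Lemma~\ref{axiomprop}(c) was restated. Everything else is a straightforward application of the quotient compatibility of $cl$.
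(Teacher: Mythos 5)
Your proof is correct and follows essentially the same route as the paper: pass to $N = G^\vee/H$, apply the non-surjective colon-capturing of Lemma~\ref{axiomprop}(c) via the map $\phi$ from Lemma~\ref{sopmap}, and pull back with Lemma~\ref{axiomprop}(a). The only cosmetic differences are that the paper cites Lemma~\ref{indep} explicitly to justify using the phantom condition with the specific diagram built in Discussion~\ref{sopconstr}, and uses axiom~(4) rather than Lemma~\ref{axiomprop}(a) for the forward passage to $N$, but neither change affects the substance of the argument.
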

\begin{proof}
As the extension is phantom, Lemma~\ref{indep} implies that $\wt\nu \in (\im\ \nu^\vee)^{cl}_{G^\vee} = (H+Ry)^{cl}_{G^\vee}$. By axiom (4), $\wt\nu +H \in (R\overline{y})^{cl}_N$. Then Lemma~\ref{sopmap} and Lemma~\ref{axiomprop}(c), the latter of which requires Axiom~(7), imply that $\wt\nu +H \in (I\overline{y})^{cl}_N$. Finally, by Lemma~\ref{axiomprop}(a), we can pull back to $G^\vee$ to get $\wt\nu \in (H+Iy)^{cl}_{G^\vee}$.
\end{proof}

Now we are ready to look at a module modification of $M$ which will trivialize the relation $x_1u_1+\cdots + x_{k+1}u_{k+1} = 0$. Starting with an arbitrary module $M$, one can produce a balanced big Cohen-Macaulay module through a very large direct limit of modules that are produced by forcing relations on systems of parameters to become trivial eventually. The difficult part is to show that one has not trivialized so much that the end module has $B = \mathfrak{m}B$. This problem can be avoided by keeping track of a special element that avoids being in $\mathfrak{m}M'$ for all modules $M'$ in the limit. In our case, we will start with an injection $\alpha:R\to M$ that is phantom and keep track of the image of $\alpha(1)$ throughout. We will keep this element out of $\mathfrak{m}M'$ by showing that all modifications remain phantom extensions of $R$ so that Corollary~\ref{avoid} gives the necessary exclusion. See \cite[Discussion 5.15]{HH94} for more background on the concept. 

\begin{disc} \label{modconstr}
Let $M$ be an arbitrary $R$-module with an injection $\alpha:R\to M$ and a relation $x_1u_1+\cdots + x_{k+1}u_{k+1} = 0$ in $M$ on a partial system of parameters $x_1,\ldots, x_{k+1}$ from $R$. Define the module modification of $M$ with respect to the relation given above by
$$
M' := \frac{M\oplus R^k}
{R(u_{k+1} , x_1,\cdots, x_k)}
$$
along with maps $f:M\to M'$ given by $u\mapsto \overline{(u,\mathbf{0})}$ and $\alpha': R\overset{\alpha}\to M\overset{f}\to M'$. Our goal is to prove that $\alpha'$ is a phantom extension when $\alpha$ is phantom. By the proof of \cite[Lemma 3.8]{D10}, $\alpha'$ is automatically injective.

Now, assume that $\alpha:R\to M$ is a phantom extension with a free presentation of $Q=M/\alpha(R)$ and commutative diagram as in (\ref{dgrm}). The big task will be to build a commutative diagram similar to (\ref{dgrm}) for $\alpha':R\to M'$. Let
$$
Q' = M'/\alpha'(R) = \frac{Q\oplus R^k}
{R(\overline{u_{k+1}} , x_1,\cdots, x_k)}
$$
Then $0\to R\overset{\alpha'}\to M'\overset{\beta'}\to Q'\to 0$, where $\beta'$ is the quotient map, gives a short exact sequence. Using the free presentation $G\overset{\nu}\to F\overset{\mu}\to Q\to 0$ of $Q$ from Discussion~\ref{sopconstr} with its special element $f^*\in F$,  we will build a free presentation of $Q'$ as $G\oplus R \overset{\nu'}\to F\oplus R^k \overset{\mu'}\to Q'\to 0$ via the maps 
$$
\mu'(f,r_1,\ldots,r_k) = \overline{(\mu(f),r_1,\ldots,r_k)}
$$ 
and 
$$
\nu'(g,r) = (\nu(g) + rf^*,rx_1,\ldots,r x_k) = \nu(g) + r(f^*,x_1,\ldots,x_d).
$$
We claim that this construction gives a free presentation for $Q'$. Indeed, since $\mu$ is surjective, $\mu'$ above will also surject onto $Q'$. To see exactness in the middle, note that 
$$
\begin{array}{rcl}
\mu'\circ\nu' (g,r) & = & \mu'(\nu(g) + rf^*,rx_1,\ldots,r x_k)  \\[2mm]
 & = & \overline{(\mu\circ\nu(g)+r\mu(f^*),rx_1,\ldots,r x_k)} \\[2mm]
& = & \overline{(r\overline{u_{k+1}},rx_1,\ldots,r x_k)} = 0.
\end{array}
$$ 
Additionally, if $\mu'(f,r_1,\ldots,r_k) = \overline{(\mu(f),r_1,\ldots,r_k)} = 0$ in $Q'$, then within $Q\oplus R^k$ we have 
$$(\mu(f),r_1,\ldots,r_k) = r(\overline{u_{k+1}} , x_1,\cdots, x_k)$$ 
for some $r\in R$. Hence, $r_i = rx_i$ for all $i$ within $R$, and $0 = \mu(f) - r\overline{u_{k+1}} = \mu(f-rf^*)$ within $Q$. These equations imply that there exists $g\in G$ such that $\nu(g) = f-rf^*$ and so 
$\nu'(g,r) = (\nu(g)+rf^*,rx_1,\ldots,rx_k) = (f,r_1,\ldots,r_k)$. 

We next join the short exact sequence $0\to R\overset{\alpha'}\to M'\overset{\beta'}\to Q'\to 0$ with the free presentation $G\oplus R \overset{\nu'}\to F\oplus R^k \overset{\mu'}\to Q'\to 0$ to form a commutative diagram. Lift $\mu'$ to a map $\wt\mu':F\oplus R^k\to M'$ such that $\wt\mu'$ restricted to $F$ is just the map $\wt\mu$, and $\wt\mu'$ restricted to $R^k$ is the same as $\mu'$ restricted to $R^k$. Then lift $\wt\mu'\circ \nu':G\oplus R\to \im\ \alpha' \subseteq M'$ to $\wt\nu':G\oplus R \to R$. We now have a commutative diagram with exact rows
$$
\xymatrix{ R \ar[r]^{\alpha'} & M' \ar[r]^{\beta'} & Q'\ar[r] & 0 \\
G\oplus R\ar[r]^{\nu'} \ar[u]_{\wt\nu'} & F\oplus R^k\ar[r]^{\mu'}\ar[u]_{\wt\mu'} & 
Q'\ar[r]\ar[u]_\id & 0} \leqno \deqno \label{moddgrm}
$$
\end{disc}

In order to prove that $\alpha':R\to M'$ is phantom when $\alpha: R\to M$ is phantom, we need to analyze  the condition $\wt\nu' \in (\im\ (\nu')^\vee)^{cl}_{(G\oplus R)^\vee}$.

\begin{lemma}
Use the notation and assumptions of Discussions~\ref{sopconstr}, Lemma!\ref{sopmap}, and \ref{modconstr}. We have the following:
\begin{alist}
\item $\wt\nu' = \wt\nu\oplus 0: G\oplus R\to R$
\item $\im\ (\nu')^\vee = (H\oplus 0) + R(y\oplus \id) + I(0\oplus \id)$ 
\item Suppose that $cl$ is a closure operation for $R$-modules that obeys the big axioms. If $\alpha:R\to M$ is phantom, then $\alpha':R\to M'$ is phantom.
\end{alist}
\end{lemma}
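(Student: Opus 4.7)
For part (a) the plan is direct computation from the definition of $\wt\mu'$. By construction $\wt\mu'(f,0)$ equals the image of $\wt\mu(f)$ in $M'$, and $\wt\mu'(0,r_1,\ldots,r_k)=\overline{(0,r_1,\ldots,r_k)}$ agrees with $\mu'$ on the second summand. Applying this to $\nu'(g,r)=(\nu(g)+rf^*,rx_1,\ldots,rx_k)$ gives
$$
\wt\mu'(\nu'(g,r))=\overline{(\wt\mu\nu(g)+ru_{k+1},rx_1,\ldots,rx_k)}=\overline{(\wt\mu\nu(g),0)}+r\,\overline{(u_{k+1},x_1,\ldots,x_k)},
$$
and the second term is $0$ by definition of $M'$. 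Thus $\wt\mu'\circ\nu'(g,r)=\overline{(\alpha(\wt\nu(g)),0)}=\alpha'(\wt\nu(g))$, so pulling back through $\alpha'$ forces $\wt\nu'(g,r)=\wt\nu(g)$, which is exactly $\wt\nu\oplus 0$.

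For part (b) I will use the splitting $F=F_1\oplus Rf^*$ to write $(F\oplus R^k)^\vee=(F_1)^\vee\oplus (Rf^*)^\vee\oplus (R^k)^\vee$. Given $h_1\in(F_1)^\vee$, $s\in R$ corresponding to $sh^*\in(Rf^*)^\vee$, and $(r_1,\ldots,r_k)\in(R^k)^\vee$, direct calculation shows
$$
(\nu')^\vee(h_1,sh^*,r_1,\ldots,r_k)(g,r)=h_1(\nu(g))+s\,y(g)+r\bigl(s+\textstyle\sum r_ix_i\bigr).
$$
As an element of $G^\vee\oplus R^\vee$ under the obvious identification of $R^\vee$ with $R$, this is $(h_1\circ\nu+sy,\; s+\sum r_ix_i)$. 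Taking $s=r_i=0$ produces $H\oplus 0$; taking $h_1=0$, $r_i=0$ produces $R(y\oplus\id)$; taking $h_1=0$, $s=0$ produces exactly $I(0\oplus\id)$ since $x_1,\ldots,x_k$ generate $I$. This establishes the stated equality for $\im(\nu')^\vee$.

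For part (c), the key upgrade is Corollary~\ref{sopph}, which says that the phantom hypothesis on $\alpha$ together with the relation $x_1u_1+\cdots+x_{k+1}u_{k+1}=0$ forces $\wt\nu\in(H+Iy)^{cl}_{G^\vee}$ (not merely $(H+Ry)^{cl}_{G^\vee}$). Apply the functorial axiom~(4) to the $R$-linear injection $\pi:G^\vee\to(G\oplus R)^\vee$ given by $h\mapsto(h,0)$ to conclude
$$
\wt\nu'=(\wt\nu,0)=\pi(\wt\nu)\in\bigl((H\oplus 0)+I(y,0)\bigr)^{cl}_{(G\oplus R)^\vee}.
$$
The finishing move is to observe $i(y,0)=i(y\oplus\id)+(-i)(0\oplus\id)\in R(y\oplus\id)+I(0\oplus\id)$ for every $i\in I$, so
$$
(H\oplus 0)+I(y,0)\subseteq (H\oplus 0)+R(y\oplus\id)+I(0\oplus\id)=\im(\nu')^\vee
$$
by part (b). The order-preserving axiom~(3) then gives $\wt\nu'\in(\im(\nu')^\vee)^{cl}_{(G\oplus R)^\vee}$, which by Lemma~\ref{indep} is exactly the assertion that $\alpha':R\to M'$ is phantom.

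The main obstacle is the bookkeeping in (c): the phantom condition for $\alpha$ controls $\wt\nu$ as an element of $G^\vee$, while the phantom condition for $\alpha'$ demands information about $(\wt\nu,0)$ inside the strictly larger module $(G\oplus R)^\vee$ with respect to the more complicated submodule $\im(\nu')^\vee$. The bridge between the two is precisely the colon-capturing strengthening encoded in Corollary~\ref{sopph}, since the naive element $(y,0)$ of $\im\nu^\vee\oplus 0$ does \emph{not} lie in $\im(\nu')^\vee$ on the nose but only modulo the $I$-multiple correction $I(0\oplus\id)$.
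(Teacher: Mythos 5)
Your proof is correct and takes essentially the same approach as the paper: identical computation in (a), identical decomposition of $(F\oplus R^k)^\vee=(F_1)^\vee\oplus(Rf^*)^\vee\oplus(R^k)^\vee$ in (b), and the same use of Corollary~\ref{sopph} plus the rewriting $i(y,0)=i(y\oplus\id)-i(0\oplus\id)$ in (c). The only minor variation is that you push $\wt\nu$ into $(G\oplus R)^\vee$ via axiom~(4) applied to the inclusion $G^\vee\hookrightarrow(G\oplus R)^\vee$, whereas the paper uses axiom~(1) together with the direct-sum formula of Lemma~\ref{axiomprop}(b); the two are interchangeable.
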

\begin{proof}
For (a), the construction above gives $\wt\nu' = (\alpha')^{-1}\circ \wt\mu' \circ \nu'$ as it does in (\ref{dgrm}). Note that 
$$
\begin{array}{rcl}
\wt\mu' \circ \nu'(g,r) & = & \overline{(\wt\mu\circ \nu(g) + r\wt\mu(f^*),rx_1,\ldots, rx_k)} \\[2mm]
& = & \overline{(\wt\mu\circ \nu(g),\mathbf{0})}+r\overline{(u_{k+1},x_1,\ldots, x_k)} = \overline{(\wt\mu\circ \nu(g),\mathbf{0})}.
\end{array}
$$ 
Thus, 
$$\wt\nu'(g,r) = (\alpha')^{-1}\overline{(\wt\mu\circ \nu(g),\mathbf{0})} = \alpha^{-1}\circ \wt\mu\circ \nu(g) = \wt\nu(g).$$

For (b), first recall from Discussion~\ref{sopconstr} that $F = F_1 \oplus Rf^*$, $H = \nu^\vee((F_1)^\vee)$, and $(\nu')^\vee: (F\oplus R^k)^\vee \to (G\oplus R)^\vee$. Let  $x_1,\ldots, x_{k+1}$ be part of a system of parameters for $R$, and let $I = (x_1,\ldots, x_k)$. Write $h\in (F\oplus R^k)^\vee = (F_1\oplus Rf^* \oplus R^k)^\vee$ as 
$$
h= h_1\oplus h_2 \oplus h_3 = h_1\oplus sh^* \oplus h_3,
$$ 
and write $\nu'(g,r) = \nu(g) + r(f^*,x_1,\ldots,x_k)$.  Then 
$$
\begin{array}{rcl}
(\nu')^\vee(h)(g,r) & = & h\circ \nu' (g,r) \\[2mm]
& = & h_1(\nu(g)) + sh^*(\nu(g))+rsh^*(f^*) + rh_3(x_1,\ldots,x_k) \\[2mm] 
& = & h_1(\nu(g)) + s(y(g)+r) + rh_3(x_1,\ldots,x_k).
\end{array}
$$ 
These three terms exactly generate $(H\oplus 0) + R(y\oplus \id) + I(0\oplus \id)$ given the form above and the fact that $I = (x_1,\ldots, x_k)$.

For (c), we assume that $\wt\nu \in (\im\ \nu^\vee)^{cl}_{G^\vee}$, and we need to show that $\wt\nu' \in (\im\ (\nu')^\vee)^{cl}_{(G\oplus R)^\vee}$. By Corollary~\ref{sopph} we know that $\wt\nu \in (H+Iy)^{cl}_{G^\vee}$. Thus, 
$$\wt\nu' = \wt\nu \oplus 0 \in (H+Iy)^{cl}_{G^\vee} \oplus 0 \subseteq (H+Iy)^{cl}_{G^\vee} \oplus 0^{cl}_{R^\vee}$$
by axiom (1). Then Lemma~\ref{axiomprop}(b) implies that $\wt\nu' \in ( (H+Iy)\oplus 0 )^{cl}_{(G\oplus R)^\vee}$. Next note that
$$(H+Iy)\oplus 0 \subseteq (H\oplus 0) + R(y\oplus \id) + I(0\oplus \id)= \im\ (\nu')^\vee$$
because 
$$
(h+iy)\oplus 0 = (h\oplus 0) + i(y\oplus 1) - i(0\oplus 1).
$$
Finally, axiom (3) achieves the result that  $\wt\nu' \in (\im\ (\nu')^\vee)^{cl}_{(G\oplus R)^\vee}$.
\end{proof}

We now have our main theorem of this section, which generalizes the main theorem of \cite{D10} to arbitrary $R$-modules $M$, not just finitely generated modules.

\begin{thm} \label{phmodbigCM}
Let $R$ be a local domain possessing an operation $cl$ that obeys the big axioms. If $\alpha:R\to M$ is a phantom extension, then $M$ can be modified into a balanced big Cohen-Macaulay module over $R$. 
\end{thm}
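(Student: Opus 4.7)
The plan is to construct $B$ as the direct limit of a transfinite sequence of module modifications of $M$, arranged so that every modification preserves the phantom property, which allows Corollary~\ref{avoid} to control the image of $\alpha(1)$ at every successor stage.

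I would proceed by transfinite induction. Set $M_0 = M$ and $\alpha_0 = \alpha$. At a successor ordinal $\sigma+1$, if there exists a partial system of parameters $x_1,\ldots,x_{k+1}$ of $R$ and elements $u_1,\ldots,u_{k+1} \in M_\sigma$ with $\sum_i x_i u_i = 0$ but $u_{k+1} \notin (x_1,\ldots,x_k)M_\sigma$, I would pick one such ``bad relation'' according to a fixed well-ordering of all such data, and let $M_{\sigma+1}$ be the module modification of $M_\sigma$ killing this relation, as built in Discussion~\ref{modconstr}. The lemma immediately preceding the theorem then guarantees that the induced map $\alpha_{\sigma+1}: R \to M_{\sigma+1}$ is again a phantom extension. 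At a limit ordinal $\lambda$, I would set $M_\lambda = \varinjlim_{\sigma<\lambda} M_\sigma$ with the induced injection $\alpha_\lambda$ (injectivity persists because directed colimits are exact). A standard cardinality/bookkeeping argument, analogous to the one in \cite[Section 3]{D10} and originally due to Hochster, shows that at some sufficiently large ordinal $\tau$ the module $B := M_\tau$ has the property that every partial system of parameters of $R$ acts as a regular sequence on $B$.

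The main obstacle is ruling out $B = 0$, or more precisely showing $\alpha_B(1) \notin \mathfrak{m}B$, which is exactly what upgrades $B$ from a module with regular parameter sequences to a \emph{balanced big} Cohen-Macaulay module. Suppose for contradiction that $\alpha_B(1) = \sum_j m_j b_j$ with each $m_j \in \mathfrak{m}$ and each $b_j \in B$. Because $\mathfrak{m}$ is finitely generated this is a finite equation, and because $B$ is a directed union, both the elements $b_j$ and the equation between them are realized already in some $M_\sigma$, yielding $\alpha_\sigma(1) \in \mathfrak{m}M_\sigma$. Iterating this descent at limit ordinals, well-foundedness of the ordinals lets us reduce to $\sigma = 0$ or to a successor $\sigma$. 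In either case $\alpha_\sigma$ is phantom — at $0$ by hypothesis, at successors by the preservation step — and Corollary~\ref{avoid} then produces the contradiction $\alpha_\sigma(1) \notin \mathfrak{m}M_\sigma$. This completes the argument and shows that $B$ is the desired balanced big Cohen-Macaulay modification of $M$.
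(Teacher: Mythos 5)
Your proposal is correct and follows essentially the same approach the paper intends: iterate module modifications (Discussion~\ref{modconstr}), use the preceding lemma to propagate the phantom property through each successor stage, and invoke Corollary~\ref{avoid} to keep the image of $\alpha(1)$ out of $\mathfrak{m}M_\sigma$, which is what prevents the limit from collapsing. The paper itself leaves the transfinite bookkeeping implicit, delegating it to \cite{Ho75}, \cite{HH94}, and \cite[Section 3]{D10}, whereas you spell out the successor/limit dichotomy and the descent argument at limit ordinals explicitly; that explicitness is welcome, and your pull-back reasoning at limits (the equation $\alpha_\lambda(1)=\sum_j m_j b_j$ involves only finitely many elements and hence is witnessed at an earlier stage, giving a strictly smaller ordinal and terminating by well-foundedness) correctly handles the fact that phantom-ness is only directly established at successor stages.
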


As an add-on fact, we point out the following lemma regarding modules that factor through phantom extensions.

\begin{lemma}
Let $R$ be a local domain with a closure operation $cl$ obeying the big axioms, and let $M$ and $M_1$ be arbitrary $R$-modules. Suppose that $R \overset{\alpha_1}\to M_1 \overset{\theta}\to M$ is a sequence of maps such that the composition $\alpha = \theta\circ \alpha_1$ is phantom via $cl$. Then $\alpha_1$ is also phantom via $cl$. 
\end{lemma}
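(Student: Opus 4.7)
First, note that $\alpha_1$ is automatically injective: the hypothesis that $\alpha=\theta\circ\alpha_1$ is phantom forces $\alpha$ to be an injection (phantom extensions are defined only for injections), and then $\alpha_1(r)=0$ implies $\alpha(r)=\theta(\alpha_1(r))=0$, hence $r=0$. So I have a short exact sequence $0\to R \overset{\alpha_1}\to M_1 \overset{\beta_1}\to Q_1 \to 0$ with $Q_1:=M_1/\alpha_1(R)$. By Lemma~\ref{indep} it suffices to exhibit \emph{one} free presentation $G_1 \overset{\nu_1}\to F_1 \overset{\mu_1}\to Q_1\to 0$ and \emph{one} lift $\wt\mu_1:F_1\to M_1$ of $\mu_1$ for which the induced $\wt\nu_1:G_1\to R$ satisfies $\wt\nu_1\in(\im\ \nu_1^\vee)^{cl}_{G_1^\vee}$.

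The plan is to compare an arbitrary diagram for $\alpha_1$ to the fixed phantom diagram for $\alpha$. The map $\theta$ descends to $\bar\theta:Q_1\to Q$ with $\bar\theta\circ\beta_1=\beta\circ\theta$. Since $F_1$ is free and $\mu$ is surjective, I can choose $\psi:F_1\to F$ with $\mu\circ\psi=\bar\theta\circ\mu_1$. Then $\mu\circ\psi\circ\nu_1=0$, so $\psi\circ\nu_1$ lands in $\ker\mu=\im\ \nu$, and freeness of $G_1$ produces $\phi:G_1\to G$ with $\nu\circ\phi=\psi\circ\nu_1$. Moreover, both $\theta\circ\wt\mu_1$ and $\wt\mu\circ\psi$ lift $\bar\theta\circ\mu_1$ to $M$, so their difference lands in $\ker\beta=\im\ \alpha$; injectivity of $\alpha$ then yields a well-defined $\sigma:F_1\to R$ with $\alpha\circ\sigma=\theta\circ\wt\mu_1-\wt\mu\circ\psi$.

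The key identity I would derive is
$$\wt\nu_1 \;=\; \wt\nu\circ\phi \;+\; \sigma\circ\nu_1,$$
obtained by computing $\alpha\circ\wt\nu_1=\theta\circ\alpha_1\circ\wt\nu_1=\theta\circ\wt\mu_1\circ\nu_1$, expanding with the identities just established, and cancelling $\alpha$. From this, the conclusion is mechanical. The second summand is $\nu_1^\vee(\sigma)\in\im\ \nu_1^\vee$. For the first, a direct check using $\nu\circ\phi=\psi\circ\nu_1$ shows $\phi^\vee(\im\ \nu^\vee)\subseteq\im\ \nu_1^\vee$, so axioms~(4) and (3) applied to the hypothesis $\wt\nu\in(\im\ \nu^\vee)^{cl}_{G^\vee}$ deliver $\wt\nu\circ\phi=\phi^\vee(\wt\nu)\in(\im\ \nu_1^\vee)^{cl}_{G_1^\vee}$. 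Axiom~(1) (closures are submodules containing the original submodule) then combines the two summands to give $\wt\nu_1\in(\im\ \nu_1^\vee)^{cl}_{G_1^\vee}$, as required.

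The only real obstacle is diagrammatic bookkeeping: choosing $\psi$, $\phi$, and $\sigma$ in the right order and invoking injectivity of $\alpha$ to extract $\sigma$. Once the identity $\wt\nu_1=\wt\nu\circ\phi+\sigma\circ\nu_1$ is in hand, the axioms handle the rest without further computation.
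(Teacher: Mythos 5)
Your proof is correct, and it follows the same diagrammatic strategy as the paper: compare a diagram for $\alpha_1$ to the fixed phantom diagram for $\alpha$ via comparison maps between the free presentations, then apply axioms~(3) and (4) to transport the closure membership. The one substantive difference is how the lift $\wt\mu_1$ is handled. The paper constructs $\wt\mu_1$ carefully (basis element by basis element) so that $\theta\circ\wt\mu_1$ lands inside $\wt\mu(F)$; this makes the comparison prism exactly commutative and yields the clean identity $\wt\nu_1 = \wt\nu\circ\psi$ with no correction term. You instead take an arbitrary lift $\wt\mu_1$, extract the discrepancy $\sigma\colon F_1\to R$ with $\alpha\circ\sigma = \theta\circ\wt\mu_1 - \wt\mu\circ\psi$, and arrive at the decomposition $\wt\nu_1 = \wt\nu\circ\phi + \sigma\circ\nu_1$; the extra summand $\sigma\circ\nu_1 = \nu_1^\vee(\sigma)$ already lies in $\im\,\nu_1^\vee$, so axiom~(1) absorbs it harmlessly. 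Both routes reach the same conclusion via the same axioms; yours trades the fiddly construction of $\wt\mu_1$ for carrying one extra (and manifestly harmless) term through the computation, which is arguably the more natural way to organize the bookkeeping.
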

\begin{proof}
Consider the diagram of maps shown below. One may want to visualize this diagram by rolling it vertically and gluing the identical top and bottom rows together to produce a commutative prism. Below, we will justify that all squares are commutative, and the rows are exact. We also show that the vertical paths are commutative in the sense that composing a pair of vertical maps from the top row to the middle row equals the composition of vertical maps from the bottom row to the middle row within the same column. 
$$
\xymatrix{
& G_1\ar[r]^{\nu_1} \ar[d]^{\wt\nu_1} & F_1\ar[r]^{\mu_1}\ar[d]^{\wt\mu_1} & Q_1\ar[r]\ar[d]^\id & 0\\ 
0\ar[r] & R \ar[r]^{\alpha_1}\ar[d]^\id & M_1 \ar[r]^{\beta_1}\ar[d]^\theta & Q_1\ar[r]\ar[d]^{\overline{\theta}} & 0 \\
0\ar[r] & R \ar[r]^\alpha & M \ar[r]^\beta & Q\ar[r] & 0 \\
& G\ar[r]^\nu \ar[u]_{\wt\nu} & F\ar[r]^\mu\ar[u]_{\wt\mu} & Q\ar[r]\ar[u]_\id & 0\\ 
& G_1\ar[r]^{\nu_1} \ar[u]_{\psi} & F_1\ar[r]^{\mu_1}\ar[u]_{\phi} & Q_1\ar[r]\ar[u]_{\overline{\theta}} & 0
}  \leqno \deqno \label{phfactor}
$$

We begin by explaining the construction of the diagram. Start with the maps $\alpha_1$ and $\alpha$. As $\alpha$ is injective (because $\alpha$ is phantom), $\alpha_1$ is also injective. Use $\theta$ to build SESs and commutative squares as in rows 2 and 3 above. Next, choose a free presentation of $Q$ and lifting maps as in row 4, which shows that the top four rows of maps have been defined and that the top three layers of squares are commutative. 

Next, we will build a free presentation for $Q_1$, but we need to be careful so that it is compatible with the presentation of $Q$. Start with an arbitrary choice of free presentation as in rows 1 and 5 (same modules and maps). We claim that we can lift $\mu_1$ to $\wt\mu_1:F_1\to M_1$ so that $\theta\circ \wt\mu_1:F_1 \to \wt\mu(F)\subseteq M$. Indeed, let $f_1$ be a free basis element for $F_1$ over $R$. Then $\mu_1(f_1) = \beta_1(m_1)$ for some $m_1\in M_1$, and $\overline{\theta}\circ\mu_1(f_1) = \mu(f)$ for some $f\in F$. Therefore, 
$$
\beta(\wt\mu(f) - \theta(m_1))  =  \mu(f) - \overline{\theta}\circ \beta_1(m_1) 
 =  \overline{\theta}\circ\mu_1(f_1) -  \overline{\theta}\circ \mu_1(f_1) = 0
$$
and so $\wt\mu(f) - \theta(m_1) = \alpha(r)$ for some $r\in R$. Define $\wt\mu_1(f_1) = m_1 +\alpha_1(r)$. Then 
$$
\theta\circ \wt\mu_1(f_1) = \theta(m_1) + \theta\circ\alpha_1(r) = [\wt\mu(f) - \alpha(r)] + \alpha(r) \in \wt\mu(F) \subseteq M
$$
Given this definition of $\wt\mu_1$, we lift $\nu_1$ to $\wt\nu_1:G_1\to R$ without extra conditions.

To complete the bottom layer of commutative squares, we define the maps $\phi$ and $\psi$. To define $\phi:F_1\to F$ as a lift of $\mu_1$ we note that $\theta\circ \wt\mu_1: F_1 \to \wt\mu(F)$ and $\wt\mu:F\to M$ gives a surjection onto $\wt\mu(F)$. Hence, we can lift $\theta\circ \wt\mu_1$ to a map $\phi:F_1\to F$ such that $\theta\circ \wt\mu_1 = \wt\mu\circ \phi$. We can then lift $\nu_1$ to $\psi:G_1\to G$ without extra conditions. 

At this point all of the maps in the diagram have been defined and all of the squares have been shown to be commutative.

We now show that the vertical paths from top to middle and bottom to middle commutate with each other in the sense described at the start of the proof. We have already noted that $\theta\circ \wt\mu_1 = \wt\mu\circ \phi$ by construction. Also, 
$$
\alpha\circ \wt\nu\circ\psi = \wt\mu\circ \phi\circ \nu_1 = \theta\circ \wt\mu_1\circ \nu_1 = \alpha\circ \wt\nu_1.
$$
As $\alpha$ is injective, we have $\wt\nu\circ\psi =\wt\nu_1$.

As a result, we can now prove that $\alpha$ phantom implies that $\alpha_1$ is phantom. As $\alpha$ is phantom, $\wt\nu \in (\im\ \nu^\vee)^{cl}_{G^\vee}$. Applying $\psi^\vee: G^\vee\to G_1^\vee$, we have 
$$
\wt\nu_1 = \psi^\vee(\wt\nu) \in (\psi^\vee(\im\ \nu^\vee))^{cl}_{G_1^\vee} = (\im(\nu\circ\psi)^\vee)^{cl}_{G_1^\vee} = (\im(\phi\circ\nu_1)^\vee)^{cl}_{G_1^\vee} \subseteq (\im\ \nu_1^\vee)^{cl}_{G_1^\vee}
$$
by using axioms (3) and (4), which shows that $\alpha_1$ is phantom. 
\end{proof}

\section{Solidity and Phantom Extensions}

Hochster \cite{Ho94} defined an $R$-module $M$ to be solid if $\Hom_R(M,R)\ne 0$. If the module is an $R$-algebra $S$, then $S$ is solid if and only if there exists an $R$-linear map $\gamma:S\to R$ such that $\gamma(1) = c\ne 0$. While Hochster showed that all balanced big Cohen-Macaulay modules and algebras are solid, an open question from \cite{Ho94} was whether or not all solid algebras can be modified into balanced big Cohen-Macaulay algebras, i.e., are solid algebras also seeds ($R$-algebras that can be mapped to a balanced big Cohen-Macaulay $R$-algebra \cite{D07}). In \cite[Example 3.14]{HH09} Hochster and Huneke gave a counterexample in characteristic $p$, which shows that not all solid algebras in characteristic $p$ are seeds even though all seeds are solid. 

In this section we provide a complementary result to the connection between solid algebras and seeds in light of the counterexample mentioned above. We will prove that if $R$ is characteristic $p$ and also $F$-finite (for $R$ reduced, $R\to R^{1/p}$ is a module-finite extension), then all solid algebras are phantom extensions of $R$ and thus can be modified into big Cohen-Macaulay modules. We end the section with an example of a solid module that is not a phantom extension of $R$.

We will need several characteristic $p$ lemmas to help set up everything.

\begin{lemma}(cf. \cite[Proposition 5.12]{HH94}) \label{pses}
Let $R$ be reduced and have characteristic $p$ with a short exact sequence $0\to R\overset{\alpha}\to M\overset{\beta}\to Q\to 0$. Then for any $q=p^e$, 
$0\to R^{1/q}\overset{1\otimes \alpha}\to R^{1/q}\otimes_R M\overset{1\otimes \beta}\to R^{1/q}\otimes_R Q\to 0$ is still a short exact sequence. 
\end{lemma}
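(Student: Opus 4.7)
The plan is that right-exactness of $R^{1/q}\otimes_R-$ handles everything except the injectivity of $1\otimes\alpha\colon R^{1/q}\to R^{1/q}\otimes_R M$. Extending one step to the left via the Tor sequence gives
$$
\mathrm{Tor}_1^R(R^{1/q},Q)\xrightarrow{\partial} R^{1/q}\otimes_R R\xrightarrow{1\otimes\alpha} R^{1/q}\otimes_R M\xrightarrow{1\otimes\beta} R^{1/q}\otimes_R Q\to 0,
$$
so after identifying $R^{1/q}\otimes_R R\cong R^{1/q}$ the entire content of the lemma is the statement that $\partial=0$. My strategy is to sandwich $\partial$ between a torsion source and a torsion-free target.

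First I would verify that $R^{1/q}$ is torsion-free over $R$, in the sense that every nonzerodivisor of $R$ acts as a nonzerodivisor on $R^{1/q}$. This is where reducedness is used: if $r\in R$ is a nonzerodivisor and $x\in R^{1/q}$ satisfies $rx=0$, then raising to the $q$-th power gives $r^q x^q=0$ inside $R$; a positive power of a nonzerodivisor is again a nonzerodivisor, so $x^q=0$ in $R$; and since $R^{1/q}$ is abstractly isomorphic to $R$ as a ring it is itself reduced, forcing $x=0$.

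Second I would show that $\mathrm{Tor}_1^R(R^{1/q},Q)$ is $R$-torsion. Let $K$ be the total ring of fractions of $R$, which for reduced $R$ is a finite product of fields and hence semisimple. Since $K$ is a localization of $R$ it is $R$-flat, so
$$
K\otimes_R \mathrm{Tor}_1^R(R^{1/q},Q)\;\cong\;\mathrm{Tor}_1^K(K\otimes_R R^{1/q},\,K\otimes_R Q)\;=\;0,
$$
the last equality because $K$ has global dimension zero. Therefore every element of $\mathrm{Tor}_1^R(R^{1/q},Q)$ is annihilated by some nonzerodivisor of $R$, so its image under $\partial$ lies in the torsion submodule of $R^{1/q}$, which by the first step is zero.

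I do not expect any real obstacle in executing this plan; the only conceptual point worth emphasizing is that although $R^{1/q}$ is not flat over $R$ in general (by Kunz's theorem, flatness of Frobenius would force $R$ to be regular), the specific connecting map out of $\mathrm{Tor}_1^R(R^{1/q},Q)$ is forced to vanish by the torsion/torsion-free dichotomy above. This is exactly the mechanism that makes $1\otimes\alpha$ injective for injections \emph{from $R$} without having to establish full exactness of $R^{1/q}\otimes_R-$.
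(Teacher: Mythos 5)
Your proof is correct. The paper states this lemma without proof, deferring to \cite[Proposition~5.12]{HH94}; your argument---reducing the claim to the vanishing of the connecting map $\Tor_1^R(R^{1/q},Q)\to R^{1/q}$, whose source is torsion (by flat base change to the total fraction ring, which kills $\Tor_1$ since that ring has global dimension zero) and whose target is torsion-free (since $R$ reduced forces $R^{1/q}$ reduced, so $r^q x^q=0$ with $r$ a nonzerodivisor gives $x=0$)---is the standard torsion/torsion-free argument underlying that reference.
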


\begin{lemma} \label{psolidmap}
Let $R$ be a domain of characteristic $p$. If $S$ is a solid $R$-algebra, then there exists a fixed $c\neq 0$ in $R$ such that for all $q=p^e$, there is an $R$-linear map $\gamma_q: R^{1/q}\otimes_R S \to R^{1/q}$ with $\gamma_q(r^{1/q}\otimes 1) = c^{1/q}r^{1/q}$.
\end{lemma}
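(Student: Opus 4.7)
The plan is to reduce the problem to the easier task of constructing an $R$-linear map $\sigma: S \to R^{1/q}$ with $\sigma(1) = c^{1/q}$. Once $\sigma$ is in hand, the assignment $r^{1/q}\otimes s \mapsto r^{1/q}\sigma(s)$ is $R$-bilinear (by $R$-linearity of $\sigma$), so it descends to an $R$-linear map $\gamma_q: R^{1/q}\otimes_R S \to R^{1/q}$ sending $r^{1/q}\otimes 1$ to $r^{1/q}\sigma(1) = c^{1/q}r^{1/q}$, as required. The constant $c\neq 0$ is fixed once and for all: solidity of $S$ together with its algebra structure supplies an $R$-linear map $\gamma: S\to R$ with $\gamma(1) = c\neq 0$, and this $c$ is used for every $q$.

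The central step is to write down $\sigma$. The formula I would propose is
$$
\sigma(s) := \gamma(s^q)^{1/q},
$$
where the $q$-th root of $\gamma(s^q)\in R$ is computed inside $R^{1/q}$. This is well-defined because $R$ is a domain, so $R^{1/q}$ is a domain in which each element of $R$ has a unique $q$-th root. Immediately, $\sigma(1) = \gamma(1)^{1/q} = c^{1/q}$.

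The only nontrivial point is the $R$-linearity of $\sigma$. The characteristic $p$ hypothesis makes both the Frobenius $s\mapsto s^q$ on $S$ and its inverse $x\mapsto x^{1/q}$ on $R^{1/q}$ into ring homomorphisms, so $(s+t)^q = s^q + t^q$, $(as)^q = a^q s^q$, $(x+y)^{1/q} = x^{1/q}+y^{1/q}$, and $(a^q x)^{1/q} = a\cdot x^{1/q}$. Combining these identities with the additivity and $R$-linearity of $\gamma$ gives $\sigma(s+t)=\sigma(s)+\sigma(t)$ and $\sigma(as)=a\sigma(s)$ by direct computation. There is no real obstacle: the entire argument hinges on the observation that Frobenius identifies $R$ with $R^{1/q}$ as rings (though not as $R$-modules), so taking $q$-th roots in $R^{1/q}$ behaves as a well-defined ring-theoretic operation, and the formula for $\sigma$ then essentially writes itself.
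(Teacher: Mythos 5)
Your proof is correct and is essentially the paper's argument: both hinge on the formula $\gamma(s^q)^{1/q}$, and the paper simply packages the map $\gamma_q$ directly via the $R$-bilinear map $(r^{1/q},s)\mapsto r^{1/q}\gamma(s^q)^{1/q}$ rather than first factoring out $\sigma$. The extra detail you give on well-definedness of $q$-th roots in the domain $R^{1/q}$ and on $R$-linearity of $\sigma$ is accurate and simply fills in what the paper leaves implicit.
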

\begin{proof}
Since $S$ is solid, there is an $R$-linear map $\gamma:S\to R$ such that $\gamma(1) = c \neq 0$. Using this fixed $c$ (independent of $q$), we will build the other maps. For each $q$, consider the $R$-bilinear map $R^{1/q}\times S \to R^{1/q}$ given by $(r^{1/q},s)\mapsto r^{1/q}\gamma(s^q)^{1/q}$. This map induces the required $\gamma_q$.
\end{proof}

\begin{lemma}
Let $R$ be a Noetherian ring (of any characteristic) with an $R$-algebra $\rho:R\to T$ such that $T$ is module-finite over $R$. Let $G$ be a free (but not necessarily finite) $R$-module. Then $T\otimes_R \Hom_R(G,R) \isom \Hom_T(T\otimes_R G, T)$. 
\end{lemma}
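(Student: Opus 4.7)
The plan is to exhibit an explicit natural $T$-linear comparison map $\Phi: T \tensor_R \Hom_R(G,R) \ra \Hom_T(T\tensor_R G, T)$ and then reduce, via a choice of free basis of $G$, to the standard fact that tensor product commutes with arbitrary direct products when one factor is finitely presented.

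First I would define $\Phi$ on elementary tensors by $\Phi(t \tensor f)(t' \tensor g) = t t' \rho(f(g))$; equivalently, $\Phi$ is the $T$-linearization of the extension-of-scalars map $\Hom_R(G,R) \ra \Hom_T(T\tensor_R G, T\tensor_R R) = \Hom_T(T\tensor_R G,T)$ that sends $f$ to $1\tensor f$. A quick check of $R$-balancedness shows $\Phi$ is well defined. Now choose a free $R$-basis $\{e_i\}_{i\in I}$ of $G$, so that $G \isom \bigoplus_{i\in I} R$, $\Hom_R(G,R) \isom \prod_{i\in I} R$, $T \tensor_R G \isom \bigoplus_{i\in I} T$ as a free $T$-module on $\{1 \tensor e_i\}$, and $\Hom_T(T \tensor_R G, T) \isom \prod_{i\in I} T$. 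Under these identifications, $\Phi$ becomes the canonical evaluation map
\[
\Psi: T \tensor_R \prod_{i \in I} R \ra \prod_{i \in I} T, \qquad t \tensor (r_i)_{i} \mapsto (\rho(r_i)\, t)_{i},
\]
so it suffices to prove that $\Psi$ is an isomorphism.

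The crux is this: because $R$ is Noetherian and $T$ is module-finite over $R$, $T$ is a finitely presented $R$-module, so one may pick a finite presentation $R^m \ra R^n \ra T \ra 0$. Tensoring this sequence with $\prod_{i\in I} R$ and using the elementary identification $R^k \tensor_R \prod_i R \isom \prod_i R^k$ (tensor commutes with finite direct sums, which commute with arbitrary products) produces a commutative diagram whose top row is $\prod_i R^m \ra \prod_i R^n \ra T \tensor_R \prod_i R \ra 0$ and whose bottom row is the product of the sequences $R^m \ra R^n \ra T \ra 0$. The top row is exact by right exactness of tensor, the bottom by exactness of arbitrary products of exact sequences, and the leftmost two vertical maps are isomorphisms; a standard diagram chase (a version of the five lemma) then forces the third vertical map $\Psi$ to be an isomorphism.

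The main obstacle is conceptual rather than technical: the tensor product does not commute with arbitrary direct products in general, so some finiteness is unavoidable when $G$ has infinite rank. Here that obstacle is overcome precisely by the Noetherian hypothesis on $R$ together with module-finiteness of $T$, which together supply finite presentability of $T$; once that is in hand the rest of the proof is purely formal.
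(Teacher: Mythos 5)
Your proof is correct, but it takes a genuinely different route from the paper's. The paper introduces the same comparison map $\phi(t\otimes h)(t'\otimes g) = tt'\rho(h(g))$ and then argues injectivity and surjectivity separately and by hand. For surjectivity, given $\wt h$ with $\wt h(1\otimes\epsilon_i)=t_i$, the paper uses Noetherianness plus module-finiteness to conclude that the $R$-submodule of $T$ generated by all the $t_i$ is finitely generated, writes each $t_i$ in terms of those finitely many generators $b_1,\dots,b_n$, and assembles an explicit preimage $\sum_j b_j\otimes h_j$. For injectivity, the paper evaluates $\sum_i t_i\otimes h_i$ at each $g\in G$ and concludes the element is zero; that step actually needs more justification than the paper gives, since the map sending an element of $T\otimes_R\Hom_R(G,R)$ to its tuple of evaluations is essentially the very map being shown to be injective. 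Your argument avoids that delicacy entirely: after choosing a basis you identify both sides with products indexed by the basis, so the lemma becomes the statement that $T\otimes_R\prod_{i\in I}R\to\prod_{i\in I}T$ is an isomorphism, which you settle at one stroke (injectivity and surjectivity together) by tensoring a finite presentation $R^m\to R^n\to T\to 0$ with $\prod_i R$ and running the five lemma. Your route is more conceptual, isolates the finiteness hypothesis exactly where it is used (finite presentability of $T$, which is the standard condition under which tensoring commutes with arbitrary products), and is arguably more robust; the paper's route buys an explicit description of a preimage, which it does not actually need for the application.
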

\begin{proof}
Define 
$$\phi: T\otimes_R \Hom_R(G,R) \to \Hom_T(T\otimes_R G, T) \mbox{ by } \phi(t\otimes h) = \wt{h},$$
where $\wt{h}(t'\otimes g) = tt'\rho(h(g))$ for all $h\in \Hom_R(G,R)$. To see that $\phi$ is one-to-one, suppose that $\phi(\sum_i t_i\otimes h_i) = 0$-map. Then for all $g \in G$, we have 
$$0 = \sum_i \wt{h_i}(1\otimes g) = \sum_i t_i\rho(h_i(g)),$$
 which implies that  
$$\sum_i  t_i\otimes h_i(g) = \sum_i t_i\rho(h_i(g))\otimes 1 = 0.$$
 Thus, $\sum_i t_i\otimes h_i$ is also the 0-map in $T\otimes_R \Hom_R(G,R)$. To see that $\phi$ is onto, let $\{\epsilon_i\}_{i\in \mathcal{I}}$ be a basis for $G$ over $R$ so that $\{1\otimes \epsilon_i\}_{i\in \mathcal{I}}$ is a basis for $T\otimes_R G$ over $T$. Let $\wt{h} \in \Hom_T(T\otimes_R G, T)$ be such that $\wt{h}(1\otimes \epsilon_i) = t_i \in T$. Let $J$ be the $T$-ideal generated by the $t_i$. As $T$ is module-finite over $R$ and $R$ is Noetherian, $J$ is a finite $R$-module with generators $\{b_1,\ldots b_n\} \subseteq T$. For all $i\in \mathcal{I}$, write $t_i = \sum_j \rho(r_{ij})b_j$. For $1\leq j\leq n$ define maps $h_j:G\to R$ by $h_j(\epsilon_i) = r_{ij}$. Thus, $\sum_{j=1}^{n} b_j\otimes h_j$ is in $T\otimes_R \Hom_R(G,R)$, and 
 $$\sum_j b_j\otimes h_j(\epsilon_i) = \sum_j b_j\otimes r_{ij} = \sum_j b_j\rho(r_{ij})\otimes 1 = t_i\otimes 1.$$ 
 Thus, $\wt{h} = \phi(\sum_j b_j\otimes h_j)$. 
\end{proof}

\begin{cor} \label{Ffinisom}
Let $R$ be a reduced, $F$-finite Noetherian ring of characteristic $p$, and let $G$ be a free $R$-module. Then $R^{1/q}\otimes_R \Hom_R(G,R) \isom \Hom_{R^{1/q}}(R^{1/q}\otimes_R G, R^{1/q})$.
\end{cor}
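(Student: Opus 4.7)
The plan is to observe that this corollary is essentially an immediate specialization of the preceding lemma. I would take $T = R^{1/q}$ and $\rho:R \to R^{1/q}$ the natural inclusion (which makes sense because $R$ is reduced, so $R^{1/q}$ embeds in the total quotient ring of a normalization, or more concretely exists as the ring of $q$-th roots in an algebraic closure of the fraction field). The lemma then yields the desired isomorphism provided its hypotheses hold for this choice of $T$.

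The only hypothesis needing verification is that $R^{1/q}$ is module-finite over $R$. The assumption is only that $R$ is $F$-finite, i.e., that $R^{1/p}$ is module-finite over $R$. So first I would note that module-finiteness is preserved under composition: if $R \subseteq R^{1/p}$ is module-finite and, by applying the Frobenius-power isomorphism, $R^{1/p} \subseteq R^{1/p^2}$ is also module-finite, then $R \subseteq R^{1/p^2}$ is module-finite. Iterating $e$ times gives that $R^{1/q} = R^{1/p^e}$ is module-finite over $R$. The Noetherian hypothesis is preserved as well (though only the Noetherian hypothesis on $R$ is needed for the lemma).

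Having verified the hypotheses, I would simply invoke the preceding lemma applied to $T = R^{1/q}$ and $G$ to obtain the stated isomorphism
\[
R^{1/q}\otimes_R \Hom_R(G,R) \isom \Hom_{R^{1/q}}(R^{1/q}\otimes_R G, R^{1/q}).
\]
There is no real obstacle here; the corollary is simply pointing out that the abstract hypothesis of the previous lemma (module-finite $R$-algebra) is realized in the setting of interest by the $F$-finiteness assumption on $R$. The only minor bookkeeping is to confirm that $F$-finiteness iterates properly from $q=p$ to arbitrary $q=p^e$.
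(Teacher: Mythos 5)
Your proposal is correct and matches the paper's intent exactly: the paper gives no separate proof of this corollary, treating it as an immediate specialization of the preceding lemma to $T = R^{1/q}$, with $F$-finiteness (iterated from $p$ to $p^e$) supplying the module-finiteness hypothesis. Your verification that module-finiteness iterates is precisely the small bookkeeping step the paper leaves implicit.
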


\begin{disc}
Let $(R,\mathfrak{m})$ be a local domain, and let $S$ be a solid $R$-algebra via the map $\alpha:R\to S$. Then $R/I$ is not solid for any proper ideal $I$ and so $\alpha$ must be an injection. Thus, we have a short exact sequence $0\to R\overset{\alpha}\to S\overset{\beta}\to Q\to 0$ with an $R$-linear map $\gamma: S\to R$ such that $\gamma(1) = c\ne 0$. 

If $R$ is also characteristic $p$, then Lemmas~\ref{pses} and \ref{psolidmap} imply that for each $q=p^e$, we have a short exact sequence $0\to R^{1/q}\overset{1\otimes \alpha}\to R^{1/q}\otimes_R S\overset{1\otimes \beta}\to R^{1/q}\otimes_R Q\to 0$ with an $R$-linear map $\gamma_q: R^{1/q}\otimes_R S\to R^{1/q}$ such that $\gamma_q(r^{1/q}\otimes 1) = c^{1/q}r^{1/q}$, i.e., $\gamma_q\circ (1\otimes \alpha)$ is multiplication by $c^{1/q}$. So, for each $q$ we can build a diagram
$$
\xymatrix{ R^{1/q} \ar[r]^{1\otimes \alpha} & R^{1/q}\otimes_R S \ar[r]^{1\otimes \beta} & R^{1/q}\otimes_R Q\ar[r] & 0 \\
R^{1/q}\otimes_R G\ar[r]^{1\otimes \nu} \ar[u]_{1\otimes \wt\nu} & R^{1/q}\otimes_R F\ar[r]^{1\otimes \mu}\ar[u]_{1\otimes \wt\mu} & 
R^{1/q}\otimes_R Q\ar[r]\ar[u]_\id & 0} \leqno \deqno \label{qdgrm}
$$
which continues to be commutative with exact rows so that the bottom row is a free presentation of $R^{1/q}\otimes_R Q$ over $R^{1/q}$. Thus, 
$$
c^{1/q}(1\otimes \wt\nu) = \gamma_q\circ (1\otimes \alpha)\circ (1\otimes \wt\nu) = \gamma_q \circ (1\otimes \wt\mu)\circ (1\otimes \nu) = (1\otimes \nu)^\vee(\gamma_q \circ (1\otimes \wt\mu))
\leqno \deqno \label{compos}
$$
and so $c^{1/q}\otimes \wt\nu \in \im((1\otimes \nu)^\vee)$ for all $q$. In other words, 
$$
c^{1/q}\otimes \wt\nu \in \im (\Hom_{R^{1/q}}(R^{1/q}\otimes_R F, R^{1/q})\to \Hom_{R^{1/q}}(R^{1/q}\otimes_R G, R^{1/q}))
$$
but Corollary~\ref{Ffinisom} means that for all $q$
$$
c^{1/q}\otimes \wt\nu \in \im (R^{1/q}\otimes_R \Hom_R(F,R)\to R^{1/q}\otimes_R \Hom_R(G,R))
$$
Hence, $\wt\nu \in (\im\ \nu^\vee)^*_{G^\vee}$, the tight closure of $\im\ \nu^\vee$ inside $G^\vee$, using tight closure defined for all $R$-modules, which satisfies the big axioms by Proposition~\ref{tcbigaxioms}. Thus the solid algebra $S$ is a phantom extension of $R$. We therefore arrive at the following theorem.
\end{disc}

\begin{thm} \label{solidalgph}
Let $(R,\mathfrak{m})$ be an $F$-finite, local domain of characteristic $p$. If $S$ is a solid $R$-algebra, then $S$ is a phantom extension of $R$ via tight closure defined for all $R$-modules as in Proposition~\ref{tcbigaxioms}.
\end{thm}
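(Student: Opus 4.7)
The plan is to follow exactly the path laid out in the discussion immediately preceding the theorem, using the lemmas and corollary already proved in this section. First I would note that since $S$ is a solid $R$-algebra, the structure map $\alpha: R \to S$ must be injective (any quotient $R/I$ by a proper ideal is not solid, since it is killed by $I \ne 0$). Thus I have a short exact sequence $0 \to R \overset{\alpha}{\to} S \overset{\beta}{\to} Q \to 0$ and a map $\gamma: S \to R$ with $\gamma(1) = c \ne 0$. Choose a free presentation $G \overset{\nu}{\to} F \overset{\mu}{\to} Q \to 0$ and lifts $\wt\mu, \wt\nu$ as in diagram (\ref{dgrm}).

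Next, I would tensor the entire diagram with $R^{1/q}$ for each $q = p^e$. Lemma~\ref{pses} guarantees that the top row stays exact, so I obtain the diagram (\ref{qdgrm}) with $R^{1/q} \otimes_R G \to R^{1/q}\otimes_R F \to R^{1/q}\otimes_R Q \to 0$ a free presentation over $R^{1/q}$. Lemma~\ref{psolidmap} then supplies a single $c$ and maps $\gamma_q: R^{1/q}\otimes_R S \to R^{1/q}$ such that $\gamma_q \circ (1 \otimes \alpha)$ is multiplication by $c^{1/q}$. Composing $\gamma_q$ with $1\otimes \wt\mu$ gives an element of $\Hom_{R^{1/q}}(R^{1/q}\otimes_R F, R^{1/q})$ whose image under $(1\otimes \nu)^\vee$ equals $c^{1/q}(1\otimes \wt\nu)$, exactly as in display (\ref{compos}).

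Now comes the key step, which is where the $F$-finite hypothesis enters: Corollary~\ref{Ffinisom} identifies $R^{1/q}\otimes_R \Hom_R(G,R)$ with $\Hom_{R^{1/q}}(R^{1/q}\otimes_R G,R^{1/q})$, and similarly for $F$, so that the image of $1 \otimes \nu^\vee$ on the tensored side coincides with the image of $(1\otimes \nu)^\vee$ on the Hom side. Combined with the previous step, this yields $c^{1/q}\otimes \wt\nu \in \im(1 \otimes \nu^\vee) = (\im\, \nu^\vee)^{[q]}_{G^\vee}$ for every $q$, with a single fixed $c \ne 0$. By the definition of tight closure for arbitrary modules recalled before Proposition~\ref{tcbigaxioms}, this means $\wt\nu \in (\im\, \nu^\vee)^*_{G^\vee}$.

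Finally, since Proposition~\ref{tcbigaxioms} shows that tight closure on arbitrary modules obeys the big axioms (using that an $F$-finite local domain has a big test element), $\wt\nu \in (\im\,\nu^\vee)^*_{G^\vee}$ is precisely the condition that $S$ is a phantom extension of $R$ via tight closure, completing the proof. The main obstacle, and the place where the $F$-finite assumption is essential, is the Hom-tensor identification in Corollary~\ref{Ffinisom}; without it, one only has that $c^{1/q}\otimes \wt\nu$ is in the image after applying the external Hom, not that it actually represents a tensor in the image of $\nu^\vee$ after base change. Everything else is a direct assembly of the machinery already built in this section.
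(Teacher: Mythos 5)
Your proposal is correct and follows essentially the same path as the paper's own proof, which appears in the Discussion immediately preceding the theorem: injectivity of $\alpha$ from solidity, tensoring diagram~(\ref{dgrm}) with $R^{1/q}$ using Lemma~\ref{pses} and Lemma~\ref{psolidmap} to produce~(\ref{qdgrm}) and display~(\ref{compos}), applying Corollary~\ref{Ffinisom} to convert membership in $\im((1\otimes\nu)^\vee)$ into membership in the Frobenius power $(\im\ \nu^\vee)^{[q]}_{G^\vee}$, and concluding tight closure membership with the fixed multiplier $c$. Your remark identifying Corollary~\ref{Ffinisom} as the precise place where $F$-finiteness is needed matches the paper's intent, and the observation that $F$-finiteness supplies a big test element (so that Proposition~\ref{tcbigaxioms} applies and the phantom-extension condition is meaningful) is a small but accurate clarification of a point the paper leaves implicit.
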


\begin{cor} \label{solidalgbigCM}
Let $(R,\mathfrak{m})$ be an $F$-finite, local domain of characteristic $p$ possessing a big test element (e.g., $R$ is also complete). If $S$ is a solid $R$-algebra, then $S$ can be modified into a balanced big Cohen-Macaulay module over $R$. 
\end{cor}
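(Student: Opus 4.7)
The plan is to invoke three results already in place and simply chain them together; the corollary is essentially a formal consequence with no new content required beyond verifying that the hypotheses of each result are met.

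First, I would appeal to Theorem~\ref{solidalgph} applied to the solid $R$-algebra $S$: since $R$ is $F$-finite of characteristic $p$, that theorem produces the inclusion $\wt\nu \in (\im\ \nu^\vee)^*_{G^\vee}$ in any diagram of the form (\ref{dgrm}) built from the (automatically injective) structure map $\alpha: R\to S$. In other words, $S$ satisfies the membership condition that defines a phantom extension via tight closure.

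Second, I would use the extra hypothesis in Corollary~\ref{solidalgbigCM} that $R$ has a big test element. This is precisely what Proposition~\ref{tcbigaxioms} needs in order to conclude that tight closure on arbitrary $R$-modules genuinely satisfies the big axioms, most importantly idempotence (axiom (2)), whose verification in the proof of Proposition~\ref{tcbigaxioms} is the only place the big test element enters. With that in hand, the phrase ``phantom extension via tight closure'' is rigorous in the sense of the definition following Discussion~\ref{freepres}, because tight closure is now a closure operation $cl$ obeying axioms (1)--(7).

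Third, with $\alpha:R\to S$ a phantom extension with respect to a closure operation satisfying the big axioms, I would apply Theorem~\ref{phmodbigCM} directly: it asserts that any such $\alpha:R\to M$ can be modified into a balanced big Cohen-Macaulay $R$-module. Taking $M=S$ produces the required modification, and the corollary is proved.

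There is no real obstacle here: Theorem~\ref{solidalgph} is the substantive step (and its proof is the work done in the preceding discussion), while Proposition~\ref{tcbigaxioms} and Theorem~\ref{phmodbigCM} slot in mechanically. The only thing to be careful about is ensuring that the hypothesis list of Corollary~\ref{solidalgbigCM} -- $F$-finite, local domain of characteristic $p$, plus big test element -- actually supplies exactly what each of the three cited results requires; this is straightforward to check.
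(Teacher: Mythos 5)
Your proposal is correct and matches the paper's proof exactly: the paper likewise chains Theorem~\ref{solidalgph} (solid implies phantom), Proposition~\ref{tcbigaxioms} (big test element makes tight closure satisfy the big axioms), and Theorem~\ref{phmodbigCM} (phantom extensions modify into balanced big Cohen-Macaulay modules). You have merely spelled out the role of the big test element more explicitly than the paper's one-line proof does.
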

\begin{proof}
Apply Theorem~\ref{phmodbigCM} to the fact that tight closure in characteristic $p$ obeys the big axioms in the presence of big test elements.
\end{proof}

The corollary above stands in contrast to Hochster and Huneke's example in \cite[Example 3.14]{HH09} of a solid algebra in characteristic $p$ that cannot be modified into a big Cohen-Macaulay algebra. In the $F$-finite case, we can now see that although there are solid algebras that are not seeds (cannot be mapped to big Cohen-Macaulay algebras), all solid algebras can be modified into big Cohen-Macaulay modules because solid algebras are phantom extensions. We next point out that if $\alpha:R\to M$ is a phantom extension,
then $M$ is solid. A discussion\footnote{These ideas arose during a conversation with Rebecca R.G.} is needed first that connects the definition of a phantom extension to the long exact sequence for Ext.

\begin{disc} \label{LESExt}
Let $(R,\mathfrak{m})$ be a local domain. Consider the setup in Discussion~\ref{freepres} but with the free presentation of $Q$ continued one step further:
$$
\xymatrix{ 0\ar[r] & R \ar[r]^\alpha & M \ar[r]^\beta & Q\ar[r] & 0 \\
H\ar[u] \ar[r]^{\psi} & G\ar[r]^\nu \ar[u]_{\wt\nu} & F\ar[r]^\mu\ar[u]_{\wt\mu} & 
Q\ar[r]\ar[u]_\id & 0} 
$$
which we alter to the diagram below with short exact sequences with $\overline{G} = G/\im\ \psi$ (and abusing notation slightly by using $\nu$ and $\wt\nu$ for the maps in both cases):
$$
\xymatrix{ 0\ar[r] & R \ar[r]^\alpha & M \ar[r]^\beta & Q\ar[r] & 0 \\
0 \ar[r] & \overline{G}\ar[r]^\nu \ar[u]_{\wt\nu} & F\ar[r]^\mu\ar[u]_{\wt\mu} & 
Q\ar[r]\ar[u]_\id & 0}
$$

After applying $\Hom(-,R) = (-)^\vee$ to the diagram above, we have long exact sequences in Ext-modules and commutative squares:
$$
\xymatrix{ 
0\ar[r] & Q^\vee \ar[d]\ar[r] & M^\vee \ar[d]\ar[r] & R^\vee \ar[d]^{\wt\nu^\vee}\ar[r]^{\Delta\hspace{5mm}} & \textnormal{Ext}^1_R(Q,R) \ar[d]^\id\ar[r] &  \textnormal{Ext}^1_R(M,R)\ar[d]\ar[r] & 0\\
0 \ar[r] & Q^\vee \ar[r] & F^\vee \ar[r]^{\nu^\vee} & \overline{G}^\vee \ar[r]^{\delta\hspace{5mm}} &  \textnormal{Ext}^1_R(Q,R) \ar[r] & 0
} 
$$
where $\delta$ and $\Delta$ are the connecting homomorphisms, we identify $\textnormal{Ext}^1_R(Q,R) = \overline{G}^\vee/\im(\nu^\vee)$, and the modules on the far rights are zero due to the fact that $R$ and $F$ are free modules. With our identification of  $\textnormal{Ext}^1_R(Q,R)$, the connecting map $\delta$ is just a projection of $\overline{G}^\vee$ onto a quotient and so $\Delta$ is a composition of $\wt\nu^\vee$ with that projection. In other words, the connecting homomorphism $\Delta: R^\vee \to  \textnormal{Ext}^1_R(Q,R)$ is induced naturally by $\wt\nu^\vee$, and so by a slight abuse of notation we identify $\Delta$ with $\wt\nu^\vee$.
See \cite[III.1, III.2, XIV.1]{CE} for further details on the diagram chasing and the identification of extensions of modules with $\textnormal{Ext}^1$ based on projective resolutions to justify everything here.
\end{disc}

\begin{prop} \label{phsolid}
Let $(R,\mathfrak{m})$ be a local domain. If $\alpha:R\to M$ is a phantom extension, then $M$ is a solid $R$-module.
\end{prop}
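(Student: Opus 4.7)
The plan is to route solidity through a balanced big Cohen-Macaulay module, using Theorem~\ref{phmodbigCM} to build the latter and Hochster's solid closure results to transfer a nonzero functional back to $M$.

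First I would apply Theorem~\ref{phmodbigCM} to the phantom extension $\alpha:R\to M$ to obtain a balanced big Cohen-Macaulay $R$-module $B$, produced as the direct limit of the iterated module modifications of $M$, together with the natural structural map $\theta: M\to B$. Using the fact (established in Section~3, specifically the lemma showing each single modification of a phantom extension is again phantom) that every intermediate $R\to M_n$ in the modification tower is phantom, Corollary~\ref{avoid} applies at every stage and tells us that the image of $\alpha(1)$ never lands in the maximal ideal times the current module. Passing to the colimit, if $\theta(\alpha(1))\in \mathfrak{m}B$ then the relation witnessing this would already be visible in some finite stage $M_n$, contradicting Corollary~\ref{avoid}. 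So $\theta(\alpha(1))\notin \mathfrak{m}B$; in particular $\alpha_B := \theta\circ\alpha : R\to B$ is injective with its image generator outside $\mathfrak{m}B$.

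Second, I would invoke Hochster's solidity results for balanced big Cohen-Macaulay modules over complete local domains \cite{Ho94}. Because $B$ is balanced big Cohen-Macaulay, the natural map $H^d_\mathfrak{m}(R)\to H^d_\mathfrak{m}(B)$ (with $d=\dim R$) is injective, so $\alpha_B(1)$ represents a nonzero class in $H^d_\mathfrak{m}(B)$; Matlis duality, available because $R$ is complete, then supplies an $R$-linear map $\psi: B\to R$ with $\psi(\alpha_B(1))\ne 0$. The composition $\psi\circ\theta:M\to R$ sends $\alpha(1)$ to the nonzero element $\psi(\alpha_B(1))$, so it is a nonzero member of $\Hom_R(M,R)$, and $M$ is solid.

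The principal obstacle is the second step. The naive consequence that $B$ is solid only asserts $\Hom_R(B,R)\ne 0$; without control on \emph{where} such a map is nonzero, the composition $\psi\circ\theta$ could easily vanish on the entire image $\theta(M)$. The sharper statement that there is $\psi:B\to R$ nonzero on the specific element $\alpha_B(1)$ is what Hochster provides over complete local domains, and it is precisely where completeness of $R$ enters the argument in an essential way; without it this approach breaks down.
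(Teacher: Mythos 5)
Your overall strategy coincides with the paper's: modify $M$ into a balanced big Cohen--Macaulay module $B$ via Theorem~\ref{phmodbigCM}, track $u=\theta(\alpha(1))\notin\mathfrak{m}B$, and use Matlis duality over the complete ring to produce a functional that is nonzero on $u$. Step~1 is fine. The problem is that you treat step~2 as a black box attributed to Hochster, whereas it is precisely the content of the paper's proof, and the form in which you state it is not quite right. Working directly over $R$, the Matlis dual of $H^d_\mathfrak{m}(B)\cong B\otimes_R H^d_\mathfrak{m}(R)$ is $\Hom_R(B,\omega_R)$, where $\omega_R=\Hom_R(H^d_\mathfrak{m}(R),E_R)$ is the canonical module --- not $\Hom_R(B,R)$, unless $R$ is Gorenstein. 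Likewise, the asserted injectivity of $H^d_\mathfrak{m}(R)\to H^d_\mathfrak{m}(B)$ is clear when the system of parameters is a regular sequence on $R$ (i.e.\ when $R$ is Cohen--Macaulay), but is not obvious for an arbitrary complete local domain; fortunately you do not actually need it, since $u\notin\mathfrak{m}B$ together with colon-capturing in $B$ already forces the class $[u;1]\in\varinjlim B/(x_1^t,\ldots,x_d^t)B$ to be nonzero.

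The paper removes both obstructions at once by invoking Cohen's structure theorem (this is where completeness genuinely enters): choose a regular local subring $A\subseteq R$ over which $R$ is module-finite, observe that $B$ is still balanced big Cohen--Macaulay over $A$, and run the local-cohomology/Matlis-duality argument over $A$, where $H^d_{\mathfrak{m}_A}(A)\cong E_A$ on the nose since $A$ is regular. This yields a surjection $\Hom_A(B,A)\twoheadrightarrow A$ given by $g\mapsto g(f(1))$, hence an $A$-linear $\theta:B\to A$ with $\theta(u)=1$, and then $\theta\circ(\text{structure map }M\to B)$ is a nonzero element of $\Hom_A(M,A)$; the passage from $A$-solidity to $R$-solidity is \cite[Corollary~2.3]{Ho94}. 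Your proposal should either reproduce this reduction to $A$, or else make the argument over $R$ honest by going through $\omega_R$ and using that for a complete local domain $\omega_R$ is a torsion-free rank-one module, hence embeds in $R$. As written, the crucial step that you correctly identify as ``the principal obstacle'' is asserted rather than established.
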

\begin{proof}
We prove the contrapositive. Suppose that $M$ is not solid so that $M^\vee = \Hom(M,R) = 0$. Then the previous discussion gives the short exact sequence
$$
0 \to R \overset{\wt\nu^\vee}\to \textnormal{Ext}^1_R(Q,R) \to \textnormal{Ext}^1_R(M,R) \to 0
$$
where we identify $R^\vee$ with $R$ and the connecting map $R \to  \textnormal{Ext}^1_R(Q,R)$ with $\wt\nu^\vee$. The map $\wt\nu^\vee$ is injective if and only if $\wt\nu^\vee(r)\neq 0$ for all $r\neq 0$ if and only if $r\wt\nu^\vee(1) \neq 0$ if and only if $r\wt\nu \neq 0$ within $\Ext^1(Q,R) = \overline{G}^\vee/\im(\nu^\vee)$ if and only if $r\wt\nu \not\in \im\ \nu^\vee$ for any $r\neq 0$. By Lemma~\ref{axiomprop}(f) we have $\wt\nu \not\in (\im\ \nu^\vee)^{cl}$ calculated within any ambient module. In particular, $\wt\nu \not\in (\im\ \nu^\vee)^{cl}_{G^\vee}$ and so $\alpha$ is not phantom.
\end{proof}

While phantom extensions yield solid modules, not all solid modules are phantom extensions as the example below shows. We state the example below in general for any closure operation $cl$ that satisfies the big axioms, but one may think about tight closure as a specific example. The generality of the example comes from \cite[Theorem 5.9]{RG15a} which states that in regular rings all closure operations that satisfy the axioms are trivial.

\begin{exam} \label{solidnotph}
Let $R$ be a regular local domain of characteristic $p$ and dimension at least 2. Let $I$ be any ideal minimally generated by at least 2 elements, and let $\alpha:R\to I$ be an injection with $1$ mapping to one of the minimal generators of $I$. The inclusion $I\subseteq R$ shows that $I$ is a solid module over $R$. Moreover, we claim that $I$ is not a phantom extension via $cl$.  Indeed, $\alpha$ is not a split map as $I = \alpha(R)\oplus W$ implies that $W$ must be a torsion module since $I\subseteq R$ is a rank 1 torsion-free submodule. However, this would imply that $W = 0$, i.e., $I = \alpha(R)$, which is impossible since $I$ is minimally generated by at least 2 elements. Since $R$ is regular, \cite[Theorem 5.9]{RG15a} implies that the only closure operation $cl$ that satisfies the finite axioms will be the trivial closure. If $\alpha:I\to R$ were a phantom extension with respect to $cl$, then the corresponding diagram (\ref{dgrm}) would show that $\wt\nu \in (\im\ \nu^\vee)^{cl}_G =  \im\ \nu^\vee$. We show that this condition implies that $\alpha$ would be split, contradicting the work above. Indeed, $\wt\nu = h\circ \nu$ for some $h:F\to R$ implies that we can define $\theta:Q\to I$ by $\theta(q) = \wt\mu(f) - \alpha\circ h(f)$, where $Q = R/\alpha(I)$ and $f\in F$ such that $\mu(f) = q$. (The commutativity of (\ref{dgrm}) shows that $\theta$ is well-defined and independent of the choice of $f$: Suppose that $\mu(f') = q$ too. Then $f-f' = \nu(g)$ for some $g\in G$, and 
$$\wt\mu(\nu(g))) - \alpha\circ h(\nu(g)) = \alpha\circ \wt\nu (g) - \alpha \circ \wt\nu(g) =0$$
shows that $\wt\mu(f) - \alpha\circ h(f) = \wt\mu(f') - \alpha\circ h(f')$.) As 
$$\beta\circ \theta (q) = \beta\circ \wt\mu(f) - \beta\circ\alpha\circ h(f) = \mu(f) - 0 = q$$
 for all $q\in Q$, we see that $\beta$ (and so $\alpha$) would be split maps. Therefore, $I$ is a solid $R$-module that is not a phantom extension of $R$.
\end{exam}

For a complete, local domain $R$, we have the following implications, where the middle arrow is only known for characteristic $p$, $F$-finite rings:
$$
\textrm{seed algebra} \implies \textrm{solid algebra} \dashrightarrow \textrm{phantom extension} \implies \textrm{solid module}
$$
with examples that the reverse directions are false. (For the middle arrow, one can choose a module modification of $R$ itself as it is a phantom extension of $R$ but has lost the algebra structure in the process.)

\section{Open Questions}

We close with some open questions that are of great interest for later study. 

\begin{nlist}
\item Suppose that $\alpha:R\to B$ is a map to a big Cohen-Macaulay module $B$. For which maps, if any, is $\alpha$ a phantom extension? 
\item Are solid algebra maps $\alpha: R\to S$ always phantom extensions? In other words, can we remove the $F$-finite (or characteristic $p$) hypotheses of Theorem~\ref{solidalgph} using other characteristic $p$ methods or by only employing the characteristic-free axioms?
\item Can either of the above questions be answered by adding new axioms to the list presented in this article? Any new axiom should follow from the existence of big Cohen-Macaulay modules and preferably also be a property held by tight closure.
\item Can one define closure operation axioms that lead to big Cohen-Macaulay algebras? In a recent paper \cite{RG15b}, Rebecca R.G.\ has produced a positive answer to the this question when one adds an additional axiom involving preservation of a phantom extension after passing to a second symmetric power of a module.
\end{nlist}

We close by acknowledging the amazing, recent work related to big Cohen-Macaulay modules and algebras accomplished via perfectoids, including\cite{A16}, \cite{B16}, \cite{HM17}. These works have proven that big Cohen-Macaulay algebras exist in general mixed characteristic cases, and we are eager to see how increased understanding of perfectoids may shed light on families of closure operations in mixed characteristic as it relates to these axioms and phantom extensions. 

\section*{Acknowledgments}
I thank Mel Hochster who suggested the proof for Proposition~\ref{finext}(2). Also, I thank both Mel and Rebecca R.G.\ who suggested the proof to Lemma~\ref{axiomprop}(e). A conversation with Rebecca R.G. on a separate problem also led to Lemma~\ref{axiomprop}(f), Discussion~\ref{LESExt}, and significant improvements to an earlier version of Proposition~\ref{phsolid}. I also thank the anonymous referees for all helpful comments, especially the suggestion to rework Example~\ref{solidnotph} for any closure operation satisfying the axioms (not just tight closure). 

%


\end{document}